\documentclass[oneside,english]{amsart}
\usepackage[T1]{fontenc}
\usepackage[latin9]{inputenc}
\usepackage{verbatim}
\usepackage{float}
\usepackage{multirow}
\usepackage{amstext}
\usepackage{amsthm}
\usepackage{amssymb}
\usepackage{graphicx}

\makeatletter

\providecommand{\tabularnewline}{\\}

\numberwithin{equation}{section}
\numberwithin{figure}{section}
\theoremstyle{plain}
\newtheorem{thm}{\protect\theoremname}
  \theoremstyle{definition}
  \newtheorem{defn}[thm]{\protect\definitionname}
  \theoremstyle{plain}
  \newtheorem{lem}[thm]{\protect\lemmaname}
  \theoremstyle{plain}
  \newtheorem{prop}[thm]{\protect\propositionname}
  \theoremstyle{definition}
  \newtheorem{example}[thm]{\protect\examplename}
  \theoremstyle{remark}
  \newtheorem*{rem*}{\protect\remarkname}
  \theoremstyle{plain}
  \newtheorem{conjecture}[thm]{\protect\conjecturename}

\usepackage{array}

\author{Lin Jiu}
\address{Department of Mathematics and Statistics, Dalhousie University, 
6316 Coburg Road, Halifax, Nova Scotia, Canada B3H 4R2}
\email{Lin.Jiu@dal.ca}

\author{Diane Yahui Shi*}
\thanks{*Corresponding author}
\address{School of Mathematics, Tianjin University, No. 92 Weijin Road, 
Nankai District, Tianjin 300072, P. R. China}
\email{shiyahui@tju.edu.cn}

\date{}

\DeclareMathOperator{\sech}{sech}

\makeatother

\usepackage{babel}
  \providecommand{\conjecturename}{Conjecture}
  \providecommand{\definitionname}{Definition}
  \providecommand{\examplename}{Example}
  \providecommand{\lemmaname}{Lemma}
  \providecommand{\propositionname}{Proposition}
  \providecommand{\remarkname}{Remark}
\providecommand{\theoremname}{Theorem}

\begin{document}

\title[Orthogonal Polynomials and Lattice Path]{Orthogonal Polynomials and Lattice Path Interpretation for Higher-order
Euler Polynomials}
\begin{abstract}
We study the higher-order Euler polynomials and give the corresponding
monic orthogonal polynomials, which are Meixner-Pollaczek polynomials
with certain arguments and constant factors. Moreover, through a general
connection between moments of random variables and the generalized
Motzkin numbers, we can obtain a new recurrence formula and a matrix
representation for the higher-order Euler polynomials, interpreting
them as weighted lattice paths. 
\end{abstract}

\keywords{higher-order Euler polynomials, orthogonal polynomial, Meixner-Pollaczek
polynomial, generalized Motzkin number, lattice path}

\subjclass[2010]{Primary 11B68, Secondary 05A99}
\maketitle

\section{\label{sec:Introduction}Introduction}

As a generalization of \emph{Euler numbers} $E_{n}$ and \emph{Euler
polynomials} $E_{n}(x)$, the \emph{higher-order Euler polynomials}
$E_{n}^{(p)}(x)$ are defined by
\begin{equation}
\left(\frac{2}{e^{z}+1}\right)^{p}e^{xz}=\sum_{n=0}^{\infty}E_{n}^{(p)}(x)\frac{z^{n}}{n!},\label{eq:GFEpn}
\end{equation}
with special values $E_{n}^{(1)}(x)=E_{n}(x)$ and $E_{n}^{(1)}(1/2)=E_{n}(1/2)=E_{n}/2^{n}$.
(See, e.g., \cite[Entries 24.16.3 and 24.2.9]{NIST}.) The first several
terms of $E_{n}^{(p)}(x)$ are as follows.

\vspace{-8bp}

\begin{table}[H]
\begin{centering}
\begin{tabular}{|l||l|l|l|}
\hline 
\multirow{1}{*}{} & $p=1$  & $p=2$  & $p=3$ \tabularnewline
\hline 
$n=0$  & $1$  & $1$  & $1$ \tabularnewline
$n=1$  & $x-\frac{1}{2}$ & $x-1$  & $x-\frac{3}{2}$\tabularnewline
$n=2$  & $x^{2}-x$ & $x^{2}-2x+\frac{1}{2}$  & $x^{2}-3x+\frac{3}{2}$\tabularnewline
$n=3$  & $x^{3}-\frac{3}{2}x^{2}+\frac{1}{4}$ & $x^{3}-3x^{2}+\frac{3}{2}x+\frac{1}{2}$  & $x^{3}-\frac{9}{2}x^{2}+\frac{9}{2}x$\tabularnewline
$n=4$  & $x^{4}-2x^{3}+x$ & $x^{4}-4x^{3}+3x^{2}+2x-1$  & $x^{4}-6x^{3}+9x^{2}-3$\tabularnewline
\hline 
\end{tabular}\\
 ~ 
\par\end{centering}
\caption{\label{tab:EpnFirstValues}$E_{n}^{(p)}(x)$ for $0\leq n\leq4$ and
$1\leq p\leq3$}
\end{table}
\vspace{-25bp}

Our first result here is to give the monic orthogonal polynomials
with respect to $E_{n}^{(p)}(x)$. Given a sequence $m_{n}$, one
can define the monic orthogonal polynomials $P_{n}(y)$ with respect
to $m_{n}$ as follows. For integers $r$ and $n$ with $0\leq r<n$,
the orthogonality means
\begin{equation}
y^{r}P_{n}(y)|{}_{y^{k}=m_{k}}=0,\label{eq:Orthogonality}
\end{equation}
where the left-hand side means expanding the polynomial and evaluating
as $y^{k}=m_{k}$ on each power of $y$. In addition, $P_{n}$ satisfies
a three-term recurrence \cite[p.~47]{Shohat}: for certain sequences
$s_{n}$ and $t_{n}$, $P_{0}(y)=1$, $P_{1}(y)=y-s_{0}$, and when
$n\geq1$, 

\begin{equation}
P_{n+1}(y)=(y-s_{n})P_{n}(y)-t_{n}P_{n-1}(y).\label{eq:ThreeTermRec}
\end{equation}

After Touchard \cite[eq.~44]{Touchard} computed the monic orthogonal
polynomials with respect to the \emph{Bernoulli numbers} $B_{n}$,
Carlitz \cite[eq.~4.7]{Calitz} and also with Al-Salam \cite[p.~93]{AlsalamCarlitz}
gave the monic orthogonal polynomials, denoted by $Q_{n}(y)$, with
respect to $E_{n}$ as follows. $Q_{0}(y)=1$, $Q_{1}(y)=y$ and for
$n\geq1$, 
\begin{equation}
Q_{n+1}(y)=yQ_{n}(y)+n^{2}Q_{n-1}(y).\label{eq:RecQ}
\end{equation}
Now, let $\Omega_{n}^{(p)}(y)$ be the monic orthogonal polynomials
with respect to $E_{n}^{(p)}(x)$, i.e., similarly as (\ref{eq:Orthogonality}),
for integers $r$ and $n$, with $0\leq r<n$, 
\begin{equation}
y^{r}\Omega_{n}^{(p)}(y)|_{y^{k}=E_{k}^{(p)}(x)}=0.\label{eq:OrthogonalityOmege}
\end{equation}
Our first result is to give the recurrence of $\Omega_{n}^{(p)}(y)$.
\begin{thm}
\label{thm:Orthogonal}For integer $p\geq1$, we have $\Omega_{0}^{(p)}(y)=1$,
$\Omega_{1}^{(p)}(y)=y-x+p/2$ and 
\begin{equation}
\Omega_{n+1}^{(p)}(y)=\left(y-x+\frac{p}{2}\right)\Omega_{n}^{(p)}(y)+\frac{n(n+p-1)}{4}\Omega_{n-1}^{(p)}(y).\label{eq:RecOmega}
\end{equation}
\end{thm}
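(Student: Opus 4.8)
The plan is to use the factor $e^{xz}$ in (\ref{eq:GFEpn}) to reduce to the symmetric case $x=p/2$, and then to recognise the resulting polynomials as rescaled Meixner--Pollaczek polynomials, whose three-term recurrence is classical.

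\emph{Shift to the symmetric case.} Writing $e^{xz}=e^{(x-p/2)z}e^{(p/2)z}$ and using $\big(2/(e^{z}+1)\big)^{p}e^{(p/2)z}=\sech^{p}(z/2)$, one obtains
\[
E_{n}^{(p)}(x)=\sum_{k=0}^{n}\binom{n}{k}\Big(x-\tfrac{p}{2}\Big)^{n-k}E_{k}^{(p)}\Big(\tfrac{p}{2}\Big),\qquad \sum_{k=0}^{\infty}E_{k}^{(p)}\Big(\tfrac{p}{2}\Big)\frac{z^{k}}{k!}=\sech^{p}\Big(\tfrac{z}{2}\Big).
\]
Thus, writing $L[y^{k}]=E_{k}^{(p)}(x)$ and $L_{0}[y^{k}]=E_{k}^{(p)}(p/2)$, the binomial identity says $L[f(y)]=L_{0}[f(y+c)]$ with $c=x-p/2$, so the monic orthogonal polynomials for $L$ are the shifts $\Omega_{n}^{(p)}(y)=P_{n}(y-c)$ of those for $L_{0}$. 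Because $L_{0}$ is symmetric (its generating function is even), the $P_{n}$ carry no diagonal term, and shifting back produces the constant coefficient $s_{n}=x-p/2$ in (\ref{eq:RecOmega}) for every $n$. This reduces the theorem to the single symmetric recurrence $P_{n+1}(t)=tP_{n}(t)+\tfrac{n(n+p-1)}{4}P_{n-1}(t)$.

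\emph{Meixner--Pollaczek identification.} For the symmetric step I would invoke the monic Meixner--Pollaczek polynomials $P_{n}^{(\lambda)}(t;\phi)$, whose three-term recurrence has diagonal coefficient proportional to $\cos\phi$ and off-diagonal coefficient $n(n+2\lambda-1)/(4\sin^{2}\phi)$. Choosing $\phi=\pi/2$ kills the diagonal term and leaves $n(n+2\lambda-1)/4$. The classical Fourier evaluation of the weight $|\Gamma(\lambda+it)|^{2}$ shows that, after the imaginary rescaling $t\mapsto -iy$ of the argument together with the constant factor $i^{n}$, the moment functional of these polynomials is exactly the one generated by $\sech^{2\lambda}(z/2)$; taking $\lambda=p/2$ matches $L_{0}$. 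The factor $i^{2}=-1$ from the rescaling converts the Meixner--Pollaczek coefficient $-n(n+p-1)/4$ into $+n(n+p-1)/4$, which is precisely what (\ref{eq:RecOmega}) requires, and this is where the ``certain arguments and constant factors'' of the abstract enter.

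\emph{Rigour and the main obstacle.} The chief difficulty is that $L$ is not positive-definite---its moments are polynomials in $x$---so the underlying ``distribution'' is only formal and the imaginary argument $-iy$ must be justified. I would treat (\ref{eq:RecOmega}) as an identity of polynomials in $y$ whose coefficients are rational in $\cos\phi,\sin\phi,\lambda$; since the Meixner--Pollaczek recurrence holds for all real $\phi\in(0,\pi)$ and $\lambda>0$, the specialisation $\phi=\pi/2$ and the continuation in the scaling parameter are legitimate. As a fully self-contained alternative avoiding complex arguments, the symmetric generating function obeys the first-order ODE $f'(z)=-\tfrac{p}{2}\tanh(z/2)\,f(z)$, which yields a linear relation among the moments $E_{k}^{(p)}(p/2)$ and lets one prove $L_{0}[t^{r}P_{n}(t)]=0$ for $0\le r<n$ by induction on $n$, reading off the coefficient from $L_{0}[P_{n}^{2}]/L_{0}[P_{n-1}^{2}]$. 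Finally I would verify the base cases $\Omega_{0}^{(p)}=1$ and $\Omega_{1}^{(p)}=y-x+p/2$, and check that at $p=1$ the recurrence (\ref{eq:RecOmega}) reduces, under the rescaling $y\mapsto 2y$, to Carlitz's (\ref{eq:RecQ}).
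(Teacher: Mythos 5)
Your proposal is correct, and its central step is genuinely different from the paper's. Both arguments share the same outer structure: reduce to the symmetric functional generated by $\sech^{p}(z/2)$ via a shift (your generating-function manipulation is exactly the paper's Lemma \ref{lem:KEY}, part 1, in probabilistic clothing), and then identify the symmetric orthogonal polynomials as Meixner--Pollaczek polynomials at $\phi=\pi/2$, $\lambda=p/2$, with an imaginary rescaling $t\mapsto -iy$ and factor $i^{n}$ supplying the sign flip $-n(n+p-1)/4\mapsto +n(n+p-1)/4$ (this rescaling is the paper's Lemma \ref{lem:KEY}, part 2, with $C$ complex; since orthogonality with respect to a moment functional is a purely algebraic identity, positive-definiteness is never needed, so your worry on that point resolves itself). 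Where you diverge is the identification itself: the paper proves it (Lemma 8, leading to Theorem \ref{thm:MainResult}) by induction on $p$, taking Carlitz's $p=1$ result (\ref{eq:OrthEuler}) as the base case and using the Meixner--Pollaczek convolution formula (\ref{eq:MPConvolution}) together with the moment convolution (\ref{eq:Convolution}) for sums of independent random variables to verify orthogonality in the inductive step; you instead invoke the classical Fourier evaluation $\int_{\mathbb{R}}|\Gamma(\lambda+it)|^{2}e^{ist}\,\mathrm{d}t=\tfrac{2\pi\Gamma(2\lambda)}{4^{\lambda}}\sech^{2\lambda}(s/2)$, which matches the normalized Meixner--Pollaczek weight's moment generating function to $\sech^{2\lambda}(z/2)$ uniformly in $\lambda$, so no induction is needed and Carlitz's result becomes the special case $p=1$ rather than an input. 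Your route is shorter and more self-contained on the orthogonal-polynomials side, at the cost of requiring that one analytic integral evaluation (which you should cite or prove, since it is the crux); the paper's route is integral-free and, more importantly, isolates the convolution property as the structural reason the Euler case works --- which is exactly why the authors can explain in Section \ref{sec:Bernoulli} that the analogue fails for higher-order Bernoulli polynomials, where continuous Hahn polynomials lack (\ref{eq:MPConvolution}). The remaining items you list (monic normalization of (\ref{eq:MPRec}), vanishing of $s_{n}$ for a symmetric quasi-definite functional, base cases, and the $p=1$ consistency check) are all routine and correct as stated.
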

See Example \ref{exa:Orthogonality} to illustrate the orthogonality
(\ref{eq:OrthogonalityOmege}) for $p=n=2$. Furthermore, Theorem
\ref{thm:Orthogonal} links $E_{n}^{(p)}(x)$ to the generalized Motzkin
numbers, defined next.
\begin{defn}
\label{def:GMN} Given arbitrary sequences $\sigma_{k}$ and $\tau_{k}$,
the \emph{generalized Motzkin numbers} $M_{n,k}$ are defined by $M_{0,0}=1$
and for $n>0$ by the recurrence 
\begin{equation}
M_{n+1,k}=M_{n,k-1}+\sigma_{k}M_{n,k}+\tau_{k+1}M_{n,k+1},\label{eq:MotzkinNumberRec}
\end{equation}
where $M_{n,k}=0$ if $k>n$ or $k<0$. (See also \cite[eq.~3]{Motzkin}.)
\end{defn}
The second result identifies $E_{n}^{(p)}(x)$ as the generalized
Motzkin numbers, which allows us to endow new recurrence, as (\ref{eq:MotzkinNumberRec})
for $E_{n}^{(p)}(x)$. 
\begin{thm}
\label{thm:Motzkin}Let $\mathfrak{E}_{n,k}^{(p)}$ be the generalized
Motzkin numbers with special choices $\sigma_{k}=x-p/2$ and $\tau_{k}=-k\left(k+p-1\right)/4$.
Then, $\mathfrak{E}_{n,0}^{(p)}=E_{n}^{(p)}(x).$
\end{thm}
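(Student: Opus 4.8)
\noindent\emph{Proof proposal.} The plan is to realize the generalized Motzkin array $\mathfrak{E}_{n,k}^{(p)}$ as the connection coefficients that express each monomial $y^{n}$ in the basis of the orthogonal polynomials $\Omega_{k}^{(p)}$ from Theorem~\ref{thm:Orthogonal}, and then to isolate the $k=0$ coefficient by invoking orthogonality. First I would introduce the linear functional $L$ acting on polynomials in $y$ by $L[y^{k}]=E_{k}^{(p)}(x)$, so that the orthogonality relation (\ref{eq:OrthogonalityOmege}) reads $L[y^{r}\Omega_{n}^{(p)}]=0$ for $0\le r<n$, while $L[1]=E_{0}^{(p)}(x)=1$.

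Next I would rewrite the three-term recurrence (\ref{eq:RecOmega}) in multiplication form. Reading off $s_{n}=x-p/2$ and $t_{n}=-n(n+p-1)/4$, the recurrence becomes $y\,\Omega_{n}^{(p)}(y)=\Omega_{n+1}^{(p)}(y)+s_{n}\Omega_{n}^{(p)}(y)+t_{n}\Omega_{n-1}^{(p)}(y)$. Now expand $y^{n}=\sum_{k\ge 0}c_{n,k}\Omega_{k}^{(p)}(y)$; since $y^{n}$ has degree $n$, the sum is finite with $c_{n,k}=0$ for $k>n$, and $c_{0,0}=1$. Multiplying by $y$, substituting the multiplication formula, and collecting the coefficient of $\Omega_{k}^{(p)}$ (arising from the raising term of $c_{n,k-1}$, the diagonal term of $c_{n,k}$, and the lowering term of $c_{n,k+1}$) yields $c_{n+1,k}=c_{n,k-1}+s_{k}c_{n,k}+t_{k+1}c_{n,k+1}$.

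This is precisely the recurrence (\ref{eq:MotzkinNumberRec}) of Definition~\ref{def:GMN} with $\sigma_{k}=s_{k}=x-p/2$ and $\tau_{k}=t_{k}=-k(k+p-1)/4$, and the initial and boundary data match ($c_{0,0}=1$, and $c_{n,k}=0$ for $k<0$ or $k>n$). Hence $c_{n,k}=\mathfrak{E}_{n,k}^{(p)}$ for all $n,k$. Finally I would apply $L$ to the expansion: $E_{n}^{(p)}(x)=L[y^{n}]=\sum_{k\ge 0}c_{n,k}\,L[\Omega_{k}^{(p)}]$. Orthogonality forces $L[\Omega_{k}^{(p)}]=L[y^{0}\Omega_{k}^{(p)}]=0$ for every $k\ge 1$, while $L[\Omega_{0}^{(p)}]=L[1]=1$, so only the $k=0$ term survives and $E_{n}^{(p)}(x)=c_{n,0}=\mathfrak{E}_{n,0}^{(p)}$, as claimed.

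The only genuinely delicate point is the bookkeeping in the coefficient comparison, in particular matching the shifted term $\tau_{k+1}c_{n,k+1}$ in (\ref{eq:MotzkinNumberRec}) against the $\Omega_{k}$ contribution coming from $y\,\Omega_{k+1}^{(p)}$, and tracking the sign: the $+\frac{n(n+p-1)}{4}$ appearing in (\ref{eq:RecOmega}) corresponds to $t_{n}=-n(n+p-1)/4$, which is exactly the negative value prescribed for $\tau_{k}$ in the statement. Everything else is a direct verification once the functional $L$ and the monomial expansion are in place.
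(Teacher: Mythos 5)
Your proof is correct, but it takes a genuinely different route from the paper. The paper proves Theorem \ref{thm:Motzkin} in one line by citing Theorem \ref{thm:generalTHM}, which it applies to the random variable $x+\epsilon^{(p)}-\frac{p}{2}$ whose moments are $E_{n}^{(p)}(x)$ by (\ref{eq:MomentEpn}); Theorem \ref{thm:generalTHM} in turn rests on matching the two J-fraction expansions (\ref{eq:JFractionMotzkin}) and (\ref{eq:JFractionMoment}), i.e., on the continued-fraction machinery of Theorems \ref{thm:CFGMN} and \ref{thm:CFMoments} together with the probabilistic interpretation. You instead bypass both the probability and the continued fractions: you expand $y^{n}=\sum_{k}c_{n,k}\Omega_{k}^{(p)}(y)$, use the multiplication form of the recurrence (\ref{eq:RecOmega}) to show that the connection coefficients $c_{n,k}$ satisfy exactly the Motzkin recurrence (\ref{eq:MotzkinNumberRec}) with $\sigma_{k}=x-p/2$, $\tau_{k}=-k(k+p-1)/4$, and then kill all terms with $k\geq1$ by applying the moment functional $L$ and the orthogonality (\ref{eq:OrthogonalityOmege}) with $r=0$. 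The bookkeeping you flag as delicate does check out, including the sign conversion $t_{n}=-n(n+p-1)/4$ and the boundary cases (note $t_{0}=0$, so the $n=0$ instance of the multiplication form is also consistent). What the paper's route buys is brevity and a reusable general statement (Theorem \ref{thm:generalTHM}) that it also invokes for the Bernoulli analogue; what your route buys is a self-contained, purely formal argument that needs only Theorem \ref{thm:Orthogonal}, works for an arbitrary moment functional without any positivity or convergence considerations, and yields as a byproduct the stronger statement that $\mathfrak{E}_{n,k}^{(p)}$ for all $k$ (not just $k=0$) are the coefficients of $y^{n}$ in the basis $\{\Omega_{k}^{(p)}\}$ --- in effect you have reproved Theorem \ref{thm:generalTHM} by Viennot-style linear algebra rather than by continued fractions.
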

To prove both Theorem \ref{thm:Orthogonal} and Theorem \ref{thm:Motzkin},
we shall organize this paper as follows.

In Section \ref{sec:Orthogonality}, we first review some basic definitions
and properties on random variables, orthogonal polynomials, and also
the probabilistic interpretation for $E_{n}^{(p)}(x)$, viewing them
as moments of a certain random variable. Next, instead of proving
the recurrence (\ref{eq:RecOmega}), we recognize $\Omega_{n}^{(p)}(y)$
as the Meixner-Pollaczek polynomials, whose definition and important
properties will also be introduced in this section. 

In Section \ref{sec:GMN}, after introducing a combinatorial interpretation
of the generalized Motzkin numbers, we present two continued fractions
expressions. This leads to a general theorem, Theorem \ref{thm:generalTHM},
identifying moments of a random variable and the generalized Motzkin
numbers. Then, we see that Theorem \ref{thm:Motzkin} is just a special
case on $E_{n}^{(p)}(x)$. Moreover, the combinatorial interpretation
as weighted lattice paths provides a matrix representation for $E_{n}^{(p)}(x)$.
In the end, an example presents a connection between the Euler numbers
and Catalan numbers. 

In the last section, Section \ref{sec:Bernoulli}, we give analogues
for Bernoulli polynomials and a conjecture for the higher-order Bernoulli
polynomials.

\section{\label{sec:Orthogonality}Orthogonal polynomials for higher-order
Euler polynomials}

\subsection{Preliminaries}

We first recall some necessary definitions and classical results on
random variables.

Given an arbitrary random variable $X$ on $\mathbb{R}$, with probability
density function $p(t)$ and moments $m_{n}$, namely, $m_{n}=\mathbb{E}[X^{n}]=\int_{\mathbb{R}}t^{n}p(t)\mathrm{d}t$,
one can consider the \emph{monic orthogonal polynomials} with respect
to $X$, denoted by $P_{n}(y)$, which are monic polynomials with
degree $\deg P_{n}=n$. For positive integers $u$ and $v$, 
\[
\mathbb{E}\left[P_{u}(X)P_{v}(X)\right]=\int_{\mathbb{R}}P_{u}(t)P_{v}(t)p(t)\mathrm{d}t=c_{u}\delta_{u,v}
\]
(see \cite[eq.~2.20]{Shohat}), where $c_{u}$ are constants depending
on $u$ and $\delta_{u,v}$ is the Kronecker delta function, which
gives $1$ if $u=v$, and $0$ if $u\neq v$. Equivalently, the orthogonality
can be expressed as a system of equations: for integers $r$ and $n$
with $0\leq r<n$, 
\[
\mathbb{E}\left[X^{r}P_{n}(X)\right]=\int_{\mathbb{R}}t^{r}P_{n}(t)\mathrm{d}t=y^{r}P_{n}(y)|_{y^{k}=m_{k}}=0,
\]
which is the same as (\ref{eq:Orthogonality}). The three-term recurrence
of $P_{n}(y)$ is stated in (\ref{eq:ThreeTermRec}), with $P_{0}(y)=1$
and $P_{1}(y)=y-s_{0}$. 

If $X'$ is another random variable independent of $X$ with moments
$m'_{n}$, then for the random variable $X+X'$ we have 
\begin{equation}
\mathbb{E}\left[\left(X+X'\right)^{n}\right]=\sum_{k=0}^{n}\binom{n}{k}m_{k}m'_{n-k}=\left(y_{1}+y_{2}\right)^{n}|_{y_{1}^{k}=m_{k},y_{2}^{k}=m'_{k}}.\label{eq:Convolution}
\end{equation}

The next lemma gives the moments and the monic orthogonal polynomials
after shifting or scaling $X$, which is crucial in the proof of Theorem
\ref{thm:Orthogonal}.
\begin{lem}
\label{lem:KEY} Let $C$ and $c$ be constants. 

$1$. For the shifted random variable $X+c$, the corresponding moments
are
\[
\mathbb{E}\left[\left(X+c\right)^{n}\right]=\sum_{k=0}^{n}\binom{n}{k}m_{k}c^{n-k}
\]
 and the monic orthogonal polynomials, denoted by $\bar{P}_{n}(y)$,
satisfy $\bar{P}_{0}(y)=1$, $\bar{P}_{1}(y)=y-s_{0}-c$ and for $n\geq1$,
\begin{equation}
\bar{P}_{n+1}(y)=(y-s_{n}-c)\bar{P}_{n}(y)-t_{n}\bar{P}_{n-1}(y).\label{eq:RecPBar}
\end{equation}

$2$. For the scaled random variable $CX$, the moments are $\mathbb{E}\left[\left(CX\right)^{n}\right]=C^{n}m_{n}$,
and the monic orthogonal polynomials, denoted by $\tilde{P}_{n}(y)$,
satisfy $\tilde{P}_{0}(y)=1$, $\tilde{P}_{1}(y)=y-Cs_{0}$ and for
$n\geq1$, 
\begin{equation}
\tilde{P}_{n+1}(y)=(y-Cs_{n})\tilde{P}_{n}(y)-C^{2}t_{n}\tilde{P}_{n-1}(y).\label{eq:RecPt}
\end{equation}
\end{lem}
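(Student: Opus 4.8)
The plan is to prove both parts directly from the definition of orthogonality via moments, exploiting the fact that the three-term recurrence coefficients $s_n$ and $t_n$ are determined by how the moment sequence behaves. Since the monic orthogonal polynomials are uniquely determined by the moment sequence through the orthogonality conditions \eqref{eq:Orthogonality}, it suffices to verify that the claimed polynomials $\bar P_n$ and $\tilde P_n$ satisfy orthogonality against the shifted and scaled moment sequences, respectively, and then read off the recurrence coefficients. Equivalently, and more cleanly, I would produce an explicit change of variables relating $P_n$ to $\bar P_n$ and $\tilde P_n$, and then substitute into the known recurrence \eqref{eq:ThreeTermRec}.

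For the scaling part (part 2), I would first confirm the moment formula $\mathbb{E}[(CX)^n]=C^n m_n$, which is immediate from linearity of expectation. The key observation is the guess $\tilde P_n(y)=C^n P_n(y/C)$; this is monic in $y$ because $P_n$ is monic of degree $n$, and one checks orthogonality directly: $\mathbb{E}[(CX)^r \tilde P_n(CX)] = C^{r+n}\mathbb{E}[X^r P_n(X)]=0$ for $0\le r<n$. Substituting $y\mapsto y/C$ into \eqref{eq:ThreeTermRec} and multiplying through by the appropriate power of $C$ then yields
\[
C^{n+1}P_{n+1}(y/C)=\left(y-Cs_n\right)C^nP_n(y/C)-C^2t_n\,C^{n-1}P_{n-1}(y/C),
\]
which is exactly \eqref{eq:RecPt}; matching the base cases gives $\tilde P_1(y)=y-Cs_0$.

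For the shifting part (part 1), the moment formula is the binomial expansion $\mathbb{E}[(X+c)^n]=\sum_k \binom{n}{k}m_k c^{n-k}$, a special case of the convolution formula \eqref{eq:Convolution} with $X'$ degenerate at $c$. The analogous guess is $\bar P_n(y)=P_n(y-c)$, which is again monic of degree $n$; orthogonality follows because for any polynomial $R$ of degree $<n$ one has $\mathbb{E}[R(X+c)\,\bar P_n(X+c)]=\mathbb{E}[R(X+c)P_n(X)]$, and $R(\,\cdot\,+c)$ still ranges over all polynomials of degree $<n$, so this vanishes by orthogonality of $P_n$. Substituting $y\mapsto y-c$ into \eqref{eq:ThreeTermRec} gives \eqref{eq:RecPBar} directly, with the base case $\bar P_1(y)=(y-c)-s_0$.

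The only genuinely delicate point is justifying that the substituted polynomials really are \emph{the} monic orthogonal polynomials, i.e.\ that the orthogonality conditions together with monicity determine the sequence uniquely; this is standard (the Gram determinants of the moment matrix are assumed nonzero, as is implicit in the existence of the recurrence), so I would invoke it rather than reprove it. The main obstacle, if any, is purely bookkeeping: keeping the shift and scale substitutions consistent across the recurrence and the two initial polynomials so that the coefficient $C^2t_n$ (rather than $Ct_n$) emerges correctly, which is why I favor the explicit substitution approach over manipulating the defining moment equations term by term.
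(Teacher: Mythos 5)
Your proposal is correct and follows essentially the same route as the paper's proof: both rest on the explicit substitutions $\bar{P}_{n}(y)=P_{n}(y-c)$ and $\tilde{P}_{n}(y)=C^{n}P_{n}(y/C)$, verification of orthogonality under the shifted/scaled random variable, and direct substitution into the three-term recurrence (\ref{eq:ThreeTermRec}). Your write-up is merely more explicit than the paper's (checking monicity, spelling out the orthogonality computation, and flagging the uniqueness of monic orthogonal polynomials, which the paper leaves implicit).
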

\begin{proof}
Computations for the moments are straightforward. We only consider
the monic orthogonal polynomials.

1. For $X+c$, notice that $\bar{P}_{n}(y):=P_{n}(y-c)$ satisfies
\[
\mathbb{E}\left[\bar{P}_{u}(X+c)\bar{P}_{v}(X+c)\right]=\mathbb{E}\left[\bar{P}_{u}(X)\bar{P}_{v}(X)\right]=c_{u}\delta_{u,v}.
\]
The recurrence (\ref{eq:RecPBar}) follows by shifting $y\mapsto y-c$
in (\ref{eq:ThreeTermRec}).

2. Similarly, for $CX$, $\tilde{P}_{n}(y):=C^{n}P_{n}(y/C)$. 
\end{proof}
Next, we recall the probabilistic interpretation for $E_{n}^{(p)}(x)$.
See, e.g., \cite[eq.~2.6]{Euler}. 

Let $L_{E}$ be a random variable with density function $p_{E}(t):=\sech(\pi t)$
on $\mathbb{R}$. Also consider a sequence of independent and identically
distributed (i.\ i.\ d.\ ) random variables $\left(L_{E_{i}}\right)_{i=1}^{p}$
with each $L_{E_{i}}$ having the same distribution as $L_{E}$. Then
$E_{n}^{(p)}(x)$ is the $n$th moment of a certain random variable:
\[
E_{n}^{(p)}(x)=\mathbb{E}\left[\left(x+\sum_{i=1}^{p}iL_{E_{i}}-\frac{p}{2}\right)^{n}\right].
\]
For simplicity, we denote $\epsilon_{i}=iL_{E_{i}}$ and $\epsilon^{(p)}:=\sum_{i=1}^{p}\epsilon_{i}$.
Then $(\epsilon_{i})_{i=1}^{n}$ is also an i.~i.~d.~sequence.
Moreover, 

\begin{equation}
E_{n}^{(p)}(x)=\mathbb{E}\left[\left(x+\epsilon^{(p)}-\frac{p}{2}\right)^{n}\right].\label{eq:MomentEpn}
\end{equation}
The \emph{higher-order Euler numbers} are usually defined as $E_{n}^{(p)}:=E_{n}^{(p)}(0)$,
for $p>1$. We next define another sequence of numbers, related to
$E_{n}^{(p)}(x)$ and $E_{n}^{(p)}$, as follows.
\begin{defn}
Define the sequence $\bar{E}_{n}^{(p)}$ by the exponential generating
function 
\[
\sech^{p}(z)=\sum_{n=0}^{\infty}\bar{E}_{n}^{(p)}\frac{z^{n}}{n!}.
\]
From (\ref{eq:GFEpn}) and (\ref{eq:MomentEpn}), we see that 
\begin{equation}
\bar{E}_{n}^{(p)}=2^{n}E_{n}^{(p)}\left(\frac{p}{2}\right),\ \ \ \bar{E}_{n}^{(1)}=E_{n},\ \ \ \text{and}\ \ \ \bar{E}_{n}^{(p)}=\mathbb{E}\left[\left(2\epsilon^{(p)}\right)^{n}\right].\label{eq:EulerPN}
\end{equation}
\end{defn}
As stated in Section \ref{sec:Introduction}, the orthogonal polynomials
with respect to $E_{n}$, denoted by $Q_{n}(y)$ satisfy the recurrence
(\ref{eq:RecQ}). In fact, the next definition shows that $Q_{n}(y)$
are basically the Meixner-Pollaczek polynomials; see, e.g., \cite[eq.~9.7.1]{OrthPoly}.
\begin{defn}
The \emph{Meixner-Pollaczek polynomials} are defined by 
\[
P_{n}^{(\lambda)}(y;\phi):=\frac{(2\lambda)_{n}}{n!}e^{in\phi}\,_{2}F_{1}\left(\genfrac{}{}{0pt}{}{-n,\lambda+iy}{2\lambda}\bigg|1-e^{-2i\phi}\right),
\]
where $(x)_{n}:=x(x+1)(x+2)\cdots(x+n-1)$ is the Pochhammer symbol
and $_{2}F_{1}$ is the hypergeometric function \cite[Entries 15.1.1 and 15.2.1]{NIST}.
\end{defn}
Following a similar computation as that in, e.g., \cite[p.~1]{Zeng},
we see
\begin{equation}
Q_{n}(y):=i^{n}n!P_{n}^{\left(\frac{1}{2}\right)}\left(\frac{-iy}{2};\frac{\pi}{2}\right).\label{eq:OrthEuler}
\end{equation}
Two important properties of $P_{n}^{(\lambda)}(y;\phi)$ are listed
in the following proposition.
\begin{prop}
The Meixner-Pollaczek polynomials $P_{n}^{(\lambda)}(y;\phi)$ %
{} satify the recurrence 
\begin{equation}
(n+1)P_{n+1}^{(\lambda)}(y;\phi)=2(y\sin\phi+(n+\lambda)\cos\phi)P_{n}^{(\lambda)}(y;\phi)-(n+2\lambda-1)P_{n-1}^{(\lambda)}(y;\phi);\label{eq:MPRec}
\end{equation}
and the convolution formula 
\begin{equation}
P_{n}^{\left(\lambda+\mu\right)}\left(y_{1}+y_{2},\phi\right)=\sum_{k=0}^{n}P_{k}^{\left(\lambda\right)}\left(y_{1},\phi\right)P_{n-k}^{(\mu)}\left(y_{2},\phi\right).\label{eq:MPConvolution}
\end{equation}
\end{prop}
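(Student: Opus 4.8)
The plan is to route both identities through the ordinary generating function
$$\sum_{n=0}^{\infty}P_{n}^{(\lambda)}(y;\phi)\,t^{n}=\left(1-e^{i\phi}t\right)^{-\lambda+iy}\left(1-e^{-i\phi}t\right)^{-\lambda-iy}=:F_{\lambda,y}(t),$$
which I would first establish from the hypergeometric definition. Expanding each factor on the right by the binomial series and collecting the coefficient of $t^{n}$ produces a terminating $_{2}F_{1}$ in the variable $1-e^{-2i\phi}$ with the Pochhammer prefactor $(2\lambda)_{n}e^{in\phi}/n!$, matching the stated definition of $P_{n}^{(\lambda)}(y;\phi)$ termwise. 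This is the one genuinely computational step; everything else follows formally.

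Given $F_{\lambda,y}$, the convolution formula (\ref{eq:MPConvolution}) is essentially automatic, since the generating function is multiplicative in its parameters. Replacing $\lambda\mapsto\lambda+\mu$ and $y\mapsto y_{1}+y_{2}$ on the right factors cleanly,
$$F_{\lambda+\mu,\,y_{1}+y_{2}}(t)=F_{\lambda,y_{1}}(t)\,F_{\mu,y_{2}}(t),$$
because the exponents on $(1-e^{i\phi}t)$ and $(1-e^{-i\phi}t)$ are additive. Taking the Cauchy product of the two series on the right and comparing the coefficient of $t^{n}$ on both sides yields $\sum_{k=0}^{n}P_{k}^{(\lambda)}(y_{1};\phi)P_{n-k}^{(\mu)}(y_{2};\phi)$, which is exactly (\ref{eq:MPConvolution}).

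For the three-term recurrence (\ref{eq:MPRec}), I would extract the first-order differential equation satisfied by $F:=F_{\lambda,y}$. Its logarithmic derivative, after clearing denominators, gives
$$F'(t)\left(1-2t\cos\phi+t^{2}\right)=F(t)\left(2\lambda\cos\phi+2y\sin\phi-2\lambda t\right),$$
where I have used $e^{i\phi}+e^{-i\phi}=2\cos\phi$ and $e^{i\phi}-e^{-i\phi}=2i\sin\phi$ to simplify both the quadratic $(1-e^{i\phi}t)(1-e^{-i\phi}t)$ and the linear bracket. Writing $F(t)=\sum_{n}P_{n}^{(\lambda)}(y;\phi)t^{n}$ and comparing the coefficient of $t^{n}$ on each side---with $F'(t)$, $tF'(t)$, and $t^{2}F'(t)$ contributing $(n+1)P_{n+1}^{(\lambda)}$, $nP_{n}^{(\lambda)}$, and $(n-1)P_{n-1}^{(\lambda)}$ respectively---gives precisely (\ref{eq:MPRec}) after collecting terms.

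The only real obstacle is verifying the generating function in the first step; once it is in hand, both the convolution and the recurrence reduce to formal manipulations of power series. As an alternative to the generating-function derivation of (\ref{eq:MPRec}), one could instead invoke Gauss's contiguous relations for $_{2}F_{1}$ directly on the definition, expressing $P_{n+1}^{(\lambda)}$, $P_{n}^{(\lambda)}$, and $P_{n-1}^{(\lambda)}$ through shifts in the parameters $-n$ and $2\lambda$; this avoids the generating function but is more delicate to bookkeep, so I would prefer the route above.
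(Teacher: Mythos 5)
Your proof is correct, but it takes a genuinely different route from the paper: the paper gives no argument at all for this proposition, instead pointing to \cite[eq.~9.7.3]{OrthPoly} for the recurrence and to \cite[p.~17]{Convolution} for the convolution formula. You derive both statements from the single generating function $F_{\lambda,y}(t)=\left(1-e^{i\phi}t\right)^{-\lambda+iy}\left(1-e^{-i\phi}t\right)^{-\lambda-iy}$, which makes the result self-contained and shows both properties flow from one source: additivity of the exponents in $(\lambda,y)$ gives $F_{\lambda+\mu,y_{1}+y_{2}}=F_{\lambda,y_{1}}F_{\mu,y_{2}}$, hence (\ref{eq:MPConvolution}) by a Cauchy product, and the logarithmic-derivative identity $F'(t)\left(1-2t\cos\phi+t^{2}\right)=F(t)\left(2\lambda\cos\phi+2y\sin\phi-2\lambda t\right)$ is correct (I verified the bracket simplification), so comparing coefficients of $t^{n}$ yields $(n+1)P_{n+1}^{(\lambda)}-2n\cos\phi\,P_{n}^{(\lambda)}+(n-1)P_{n-1}^{(\lambda)}=\left(2\lambda\cos\phi+2y\sin\phi\right)P_{n}^{(\lambda)}-2\lambda P_{n-1}^{(\lambda)}$, which rearranges exactly to (\ref{eq:MPRec}). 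The trade-off is the usual one: the paper's citation is shorter and leans on standard references, while your argument adds a page of computation but removes the dependence on outside sources and explains \emph{why} the convolution holds (multiplicativity of the generating function), which is otherwise a somewhat unmotivated identity.

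One caveat on your first step, which you correctly flag as the only computational one but then understate: expanding the two binomial factors and collecting the coefficient of $t^{n}$ does \emph{not} match the definition ``termwise.'' The direct expansion gives
\begin{equation*}
[t^{n}]\,F_{\lambda,y}(t)=e^{in\phi}\,\frac{(\lambda-iy)_{n}}{n!}\ {}_{2}F_{1}\left(\genfrac{}{}{0pt}{}{-n,\ \lambda+iy}{1-n-\lambda+iy}\bigg|\,e^{-2i\phi}\right),
\end{equation*}
a terminating series in the variable $e^{-2i\phi}$ with an $n$-dependent lower parameter, not the stated form with argument $1-e^{-2i\phi}$ and lower parameter $2\lambda$. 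To bridge the two you need the Pfaff-type transformation for terminating series, ${}_{2}F_{1}(-n,b;c;x)=\frac{(c-b)_{n}}{(c)_{n}}\,{}_{2}F_{1}(-n,b;b-c-n+1;1-x)$ with $b=\lambda+iy$ and $c=2\lambda$ (note $(c-b)_{n}=(\lambda-iy)_{n}$), or equivalently you may quote the generating function itself, which is also listed in \cite{OrthPoly}. With that one identity supplied, the rest of your argument goes through exactly as written.
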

For proofs and further details of the proposition above, see \cite[eq.~9.7.3]{OrthPoly}
and \cite[p.~17]{Convolution}, respectively.

\subsection{Proof of Theorem \ref{thm:Orthogonal}. }

The following theorem gives an explicit expression of $\Omega_{n}^{(p)}(y)$,
which implies Theorem \ref{thm:Orthogonal}, by (\ref{eq:MPRec}).
\begin{thm}
\label{thm:MainResult}For positive integers $n$ and $p$, we have
\[
\Omega_{n}^{(p)}(y)=\frac{i^{n}n!}{2^{n}}P_{n}^{\left(\frac{p}{2}\right)}\left(-i\left(y-x+\frac{p}{2}\right);\frac{\pi}{2}\right).
\]
\end{thm}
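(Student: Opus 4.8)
The plan is to pass to the \emph{centered} variable, prove the Meixner--Pollaczek identity there by induction on $p$, and then reinstate the shift. Write $S_p=\epsilon^{(p)}=\sum_{i=1}^{p}\epsilon_i$ for the sum of the $p$ i.i.d.\ copies, so that by (\ref{eq:MomentEpn}) the random variable whose moments are $E_n^{(p)}(x)$ is $S_p+\left(x-\tfrac p2\right)$. By Lemma \ref{lem:KEY}(1) applied with $c=x-\tfrac p2$, its monic orthogonal polynomials are obtained from those of $S_p$ via the substitution $y\mapsto y-x+\tfrac p2$. Hence it suffices to show that the monic orthogonal polynomials $\Psi_n^{(p)}$ of $S_p$ are
\[
\Psi_n^{(p)}(z)=\frac{i^{n}n!}{2^{n}}\,P_n^{\left(\frac p2\right)}(-iz;\tfrac\pi2),
\]
since substituting $z=y-x+\tfrac p2$ then yields exactly Theorem \ref{thm:MainResult}.

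For the base case $p=1$ I would start from the known Euler result. By (\ref{eq:EulerPN}) the Euler numbers $E_n=\bar E_n^{(1)}$ are the moments of $2\epsilon^{(1)}$, and (\ref{eq:OrthEuler}) identifies their monic orthogonal polynomials as $Q_n(y)=i^{n}n!\,P_n^{(1/2)}(-iy/2;\tfrac\pi2)$. Scaling by the factor $C=\tfrac12$ through Lemma \ref{lem:KEY}(2) turns these into the orthogonal polynomials of $\epsilon^{(1)}$, and the computation $2^{-n}Q_n(2z)=\tfrac{i^{n}n!}{2^{n}}P_n^{(1/2)}(-iz;\tfrac\pi2)$ gives precisely $\Psi_n^{(1)}$. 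That each $\Psi_n^{(p)}$ is monic of degree $n$ I would check once from the recurrence (\ref{eq:MPRec}): with $\phi=\tfrac\pi2$ its leading term gives $P_n^{(\lambda)}(y;\tfrac\pi2)=\tfrac{2^{n}}{n!}y^{n}+\cdots$, and the factors $i^{n}$ and $(-i)^{n}$ coming from $y=-iz$ cancel, leaving $\Psi_n^{(p)}(z)=z^{n}+\cdots$.

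For the inductive step I would convert the Meixner--Pollaczek convolution (\ref{eq:MPConvolution}) into a statement about the monic polynomials. Taking $\lambda=\tfrac p2$, $\mu=\tfrac12$, $\phi=\tfrac\pi2$, setting $y_1=-iz_1$, $y_2=-iz_2$, and multiplying through by $\tfrac{i^{n}n!}{2^{n}}$, all powers of $i$ and $2$ collapse and (\ref{eq:MPConvolution}) becomes the clean binomial convolution
\[
\Psi_n^{(p+1)}(z_1+z_2)=\sum_{k=0}^{n}\binom nk\Psi_k^{(p)}(z_1)\,\Psi_{n-k}^{(1)}(z_2).
\]
Now $S_{p+1}=S_p+\epsilon_{p+1}$ with $S_p$ and $\epsilon_{p+1}$ independent, so I substitute $z_1=S_p$, $z_2=\epsilon_{p+1}$, multiply by $S_{p+1}^{r}=\sum_j\binom rj S_p^{\,j}\epsilon_{p+1}^{\,r-j}$, take expectations, and factor by independence. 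Each term then carries a factor $\mathbb E[S_p^{\,j}\Psi_k^{(p)}(S_p)]$ and a factor $\mathbb E[\epsilon_{p+1}^{\,r-j}\Psi_{n-k}^{(1)}(\epsilon_{p+1})]$; by the inductive hypothesis and the base case these vanish unless $j\ge k$ and $r-j\ge n-k$ hold simultaneously, which forces $r\ge n$. Thus for every $0\le r<n$ the entire sum is $0$, i.e.\ $\Psi_n^{(p+1)}$ is orthogonal to $1,S_{p+1},\dots,S_{p+1}^{\,n-1}$ with respect to the distribution of $S_{p+1}$. Since $S_{p+1}$ has a genuine density its moment (Hankel) matrices are nonsingular, so the monic orthogonal polynomials are unique; being monic of degree $n$ and orthogonal, $\Psi_n^{(p+1)}$ must be them, which closes the induction.

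The main obstacle is the inductive step, and specifically the bookkeeping that turns (\ref{eq:MPConvolution}) into the binomial convolution above together with the degree count $j\ge k,\ r-j\ge n-k\Rightarrow r\ge n$: this is where the purely algebraic convolution identity for Meixner--Pollaczek polynomials and the probabilistic convolution of independent variables must be matched exactly. The remaining ingredients --- the shift reduction, the scaling in the base case, and the leading-coefficient computation --- are routine applications of Lemma \ref{lem:KEY} and (\ref{eq:MPRec}).
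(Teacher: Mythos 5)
Your proposal is correct and follows essentially the same route as the paper: both reduce to the centered/scaled variable via Lemma \ref{lem:KEY}, then induct on the order $p$, using the Meixner--Pollaczek convolution (\ref{eq:MPConvolution}) to produce the binomial convolution of candidate polynomials and the same degree-counting argument ($j\ge k$ and $r-j\ge n-k$ would force $r\ge n$) to verify orthogonality. The only differences are cosmetic --- the paper splits $p=p_1+p_2$ rather than $p\to p+1$, works with $\bar E_n^{(p)}$ and applies the scaling at the end rather than the start, and phrases orthogonality in umbral-substitution rather than expectation notation, while you make explicit the uniqueness of monic orthogonal polynomials that the paper leaves implicit.
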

To prove this result, we need the following lemma.
\begin{lem}
Let $Q_{n}^{(p)}(y)$ be the monic orthogonal polynomials with respect
to $\bar{E}_{n}^{(p)}$. Then, 
\begin{equation}
Q_{n}^{(p)}(y)=i^{n}n!P_{n}^{\left(\frac{p}{2}\right)}\left(-\frac{iy}{2};\frac{\pi}{2}\right).\label{eq:OrthEulerp}
\end{equation}
\end{lem}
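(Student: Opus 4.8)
The plan is to prove the displayed identity (\ref{eq:OrthEulerp}) by induction on $p$, using the probabilistic description $\bar{E}_{n}^{(p)}=\mathbb{E}[(2\epsilon^{(p)})^{n}]$ from (\ref{eq:EulerPN}) together with the Meixner--Pollaczek convolution formula (\ref{eq:MPConvolution}), which mirrors the binomial convolution of moments (\ref{eq:Convolution}). Throughout I write $R_{n}^{(p)}(y):=i^{n}n!\,P_{n}^{(p/2)}(-iy/2;\pi/2)$ for the right-hand side, so that the goal is to identify $R_{n}^{(p)}$ with the monic orthogonal polynomial $Q_{n}^{(p)}$ of $\bar{E}_{n}^{(p)}$. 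The base case $p=1$ is exactly (\ref{eq:OrthEuler}), since $\bar{E}_{n}^{(1)}=E_{n}$.

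First I would record two structural facts about $R_{n}^{(p)}$. Setting $\phi=\pi/2$ (so $\sin\phi=1$, $\cos\phi=0$) and $\lambda=p/2$ in the three-term recurrence (\ref{eq:MPRec}) and then substituting $y\mapsto -iy/2$, a short computation gives
\[
R_{n+1}^{(p)}(y)=y\,R_{n}^{(p)}(y)+n(n+p-1)\,R_{n-1}^{(p)}(y),\qquad R_{0}^{(p)}=1,\ R_{1}^{(p)}=y,
\]
which in particular shows by induction that $R_{n}^{(p)}$ is monic of degree $n$ (and recovers (\ref{eq:RecQ}) at $p=1$). Next, inserting $\phi=\pi/2$, $\lambda=p/2$, $\mu=q/2$ and the arguments $-iu/2,\,-iv/2$ into (\ref{eq:MPConvolution}) and clearing the factors $i^{n}$ and $k!(n-k)!$ turns the Meixner--Pollaczek convolution into the clean binomial identity
\[
R_{n}^{(p+q)}(u+v)=\sum_{k=0}^{n}\binom{n}{k}R_{k}^{(p)}(u)\,R_{n-k}^{(q)}(v).
\]

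For the inductive step I would assume that (\ref{eq:OrthEulerp}) holds at parameter $p$ and, using the base case at parameter $1$, verify it at parameter $p+1$. Let $U$ and $V$ be independent with $U$ distributed as $2\epsilon^{(p)}$ and $V$ distributed as $2\epsilon^{(1)}$; then $U+V$ is distributed as $2\epsilon^{(p+1)}$ and has moments $\bar{E}_{n}^{(p+1)}$ by (\ref{eq:Convolution}). For $0\le r<n$, I expand $(U+V)^{r}$ by the binomial theorem and $R_{n}^{(p+1)}(U+V)$ by the convolution identity above, and use independence to factor the expectation of a typical term as $\mathbb{E}[U^{j}R_{k}^{(p)}(U)]\cdot\mathbb{E}[V^{\,r-j}R_{n-k}^{(1)}(V)]$. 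By the inductive hypothesis the first factor vanishes unless $j\ge k$ and the second unless $r-j\ge n-k$; adding these inequalities forces $r\ge n$, contradicting $r<n$, so every term is zero and hence $\mathbb{E}[(U+V)^{r}R_{n}^{(p+1)}(U+V)]=0$. Since $R_{n}^{(p+1)}$ is monic and the moment functional of $2\epsilon^{(p+1)}$ is positive definite (it is the law of a random variable with a genuine density on $\mathbb{R}$), the monic orthogonal polynomials are unique, and therefore $R_{n}^{(p+1)}=Q_{n}^{(p+1)}$, completing the induction.

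I expect the main obstacle to be the double-orthogonality counting in the inductive step: it is the one genuinely load-bearing argument, and it works only because the binomial convolution identity is available in exactly this form. The surrounding bookkeeping---tracking the factors $i^{n}$ and the argument shift $y\mapsto -iy/2$ when passing from (\ref{eq:MPRec}) and (\ref{eq:MPConvolution}) to the two statements about $R_{n}^{(p)}$---is routine but error-prone, since a single misplaced factor would spoil both the monic normalization and the degree count. Finally, I would make explicit the one-line remark that positive-definiteness of the functional is what allows \emph{monic plus orthogonal} to pin down $Q_{n}^{(p+1)}$ uniquely.
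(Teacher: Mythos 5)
Your argument follows essentially the same route as the paper's proof: induction on the order, translation of the Meixner--Pollaczek convolution (\ref{eq:MPConvolution}) into the binomial identity $R_{n}^{(p+q)}(u+v)=\sum_{k}\binom{n}{k}R_{k}^{(p)}(u)R_{n-k}^{(q)}(v)$, and the identical degree-counting argument (the paper phrases it as: either $l<k$ or $r-l<n-k$, so every term of the double sum vanishes). The paper splits $p=p_{1}+p_{2}$ arbitrarily instead of peeling off one copy, and uses umbral substitutions $y_{1}^{s}=\bar{E}_{s}^{(p_{1})}$, $y_{2}^{s}=\bar{E}_{s}^{(p_{2})}$ rather than expectations of independent variables; these differences are cosmetic. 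Your explicit derivation of $R_{n+1}^{(p)}=yR_{n}^{(p)}+n(n+p-1)R_{n-1}^{(p)}$ from (\ref{eq:MPRec}) to certify monicity is correct, and is a detail the paper leaves implicit.

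The one genuine flaw is your closing uniqueness step. You assert that the moment functional of $2\epsilon^{(p+1)}$ is positive definite because it ``is the law of a random variable with a genuine density on $\mathbb{R}$.'' It is not: in the paper's notation $\epsilon_{i}=iL_{E_{i}}$, the factor $i$ is the imaginary unit (this is why the paper can call the $\epsilon_{i}$ identically distributed), so $2\epsilon^{(p)}$ takes purely imaginary values. Its moments $\bar{E}_{n}^{(p)}$ cannot be the moments of any real random variable: already $\bar{E}_{2}^{(p)}=-p<0$, whereas positive definiteness forces a nonnegative second moment; equivalently, your own recurrence has $t_{n}=-n(n+p-1)<0$, which is impossible for a positive definite functional. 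The step is repairable from what you have already proved: writing $\mathcal{L}$ for the functional with moments $\bar{E}_{n}^{(p+1)}$, orthogonality of the $R_{j}^{(p+1)}$ together with the recurrence gives $\mathcal{L}\bigl[(R_{n}^{(p+1)})^{2}\bigr]=\mathcal{L}\bigl[y^{n}R_{n}^{(p+1)}\bigr]=\prod_{k=1}^{n}\bigl(-k(k+p)\bigr)\neq0$, so $\mathcal{L}$ is quasi-definite (equivalently, its Hankel determinants $\prod_{j\leq n}\mathcal{L}[(R_{j}^{(p+1)})^{2}]$ are nonzero), and quasi-definiteness---not positive definiteness---is exactly what makes a monic degree-$n$ polynomial orthogonal to all lower powers unique: if $D$ has degree below $n$ and $\mathcal{L}[y^{r}D]=0$ for all $r<n$, expanding $D=\sum_{j<n}c_{j}R_{j}^{(p+1)}$ yields $c_{j}\mathcal{L}[(R_{j}^{(p+1)})^{2}]=0$, hence $D=0$. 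With that substitution your proof is complete and coincides with the paper's.
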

\begin{proof}
We shall prove this by induction on the order $p$. Obviously, (\ref{eq:OrthEulerp})
coincides with (\ref{eq:OrthEuler}) when $p=1$. If $p>1$, we write
$p=p_{1}+p_{2}$, with both $p_{1},\ p_{2}\geq1$. By inductive hypothesis,
$Q_{n}^{(p_{i})}(y)$, defined by (\ref{eq:OrthEulerp}), are the
monic orthogonal polynomials with respect to $\bar{E}_{n}^{(p_{i})}$,
for $i=1,\ 2$. Now, we write $y=y_{1}+y_{2}$, so that by (\ref{eq:MPConvolution}),
\begin{align*}
 & \sum_{k=0}^{n}\binom{n}{k}Q_{k}^{(p_{1})}(y_{1})Q_{n-k}^{(p_{2})}(y_{2})\\
= & \sum_{k=0}^{n}\binom{n}{k}\left[i^{k}k!P_{k}^{\left(\frac{p_{1}}{2}\right)}\left(-\frac{iy_{1}}{2};\frac{\pi}{2}\right)\right]\left[i^{n-k}(n-k)!P_{n-k}^{\left(\frac{p_{2}}{2}\right)}\left(-\frac{iy_{2}}{2};\frac{\pi}{2}\right)\right]\\
= & i^{n}n!\sum_{k=0}^{n}P_{k}^{\left(\frac{p_{1}}{2}\right)}\left(-\frac{iy_{1}}{2};\frac{\pi}{2}\right)P_{n-k}^{\left(\frac{p_{2}}{2}\right)}\left(-\frac{iy_{2}}{2};\frac{\pi}{2}\right)\\
= & i^{n}n!P_{n}^{\left(\frac{p}{2}\right)}\left(-\frac{iy}{2};\frac{\pi}{2}\right)=Q_{n}^{(p)}(y).
\end{align*}
To show the orthogonality, we consider integers $r$ and $n$ with
$0\leq r<n$. Then
\begin{align*}
y^{r}Q_{n}^{(p)}(y) & =\left(y_{1}+y_{2}\right)^{r}\sum_{k=0}^{n}\binom{n}{k}Q_{k}^{(p_{1})}(y_{1})Q_{n-k}^{(p_{2})}(y_{2})\\
 & =\left(\sum_{l=0}^{n}\binom{n}{l}y_{1}^{l}y_{2}^{r-l}\right)\left(\sum_{k=0}^{n}\binom{n}{k}Q_{k}^{(p_{1})}(y_{1})Q_{n-k}^{(p_{2})}(y_{2})\right)\\
 & =\sum_{l=0}^{n}\sum_{k=0}^{n}\binom{n}{l}\binom{n}{k}\left(y_{1}^{l}Q_{k}^{(p_{1})}(y_{1})\right)\left(y_{2}^{r-l}Q_{n-k}^{(p_{2})}(y_{2})\right).
\end{align*}
Now, from (\ref{eq:Convolution}) and the fact that $\bar{E}_{n}^{(p)}=\mathbb{E}\left[\left(2\epsilon^{(p)}\right)^{n}\right]=\mathbb{E}\left[\left(2\epsilon^{(p_{1})}+2\epsilon^{(p_{2})}\right)^{n}\right]$,
we have
\begin{align*}
 & y^{r}Q_{n}^{(p)}(y)|_{y^{s}=\bar{E}_{s}^{(p)}}\\
= & \left(y_{1}+y_{2}\right)^{r}Q_{n}^{(p)}(y_{1}+y_{2})|_{y_{1}^{s}=\bar{E}_{s}^{(p_{1})},y_{2}^{s}=\bar{E}_{s}^{(p_{2})}}\\
= & \sum_{l=0}^{n}\sum_{k=0}^{n}\binom{n}{l}\binom{n}{k}\left(y_{1}^{l}Q_{k}^{(p_{1})}(y_{1})\right)\bigg|_{y_{1}^{s}=\bar{E}_{s}^{(p_{1})}}\left(y_{2}^{r-l}Q_{n-k}^{(p_{2})}(y_{2})\right)\bigg|_{y_{2}^{s}=\bar{E}_{s}^{(p_{2})}}.
\end{align*}
Since $l+(r-l)=r<n=k+(n-k)$ for each term in the sum above, either
$l<k$ or $r-l<n-k$ holds, implying the orthogonality: $y^{r}Q_{n}^{(p)}(y)|_{y^{s}=\bar{E}_{s}^{(p)}}=0$.
\end{proof}
\begin{proof}
[Proof of Theorem \ref{thm:MainResult}] From (\ref{eq:MomentEpn})
and (\ref{eq:EulerPN}), we see
\[
E_{n}^{(p)}(x)=\mathbb{E}\left[\left(x+\epsilon^{(p)}-\frac{p}{2}\right)^{n}\right]=\mathbb{E}\left[\left(x-\frac{p}{2}+\frac{1}{2}\cdot\left(2\epsilon^{(p)}\right)\right)^{n}\right],
\]
where, as shown above, $\bar{E}_{n}^{(p)}=\mathbb{E}\left[\left(2\epsilon^{(p)}\right)^{n}\right]$.
Then, we apply Lemma \ref{lem:KEY} twice, for $C=1/2$ and $c=x-p/2$,
to obtain
\[
\Omega_{n}^{(p)}(y)=\frac{1}{2^{n}}Q_{n}^{(p)}\left(2\left(y-x+\frac{p}{2}\right)\right)=\frac{i^{n}n!}{2^{n}}P_{n}^{\left(\frac{p}{2}\right)}\left(-i\left(y-x+\frac{p}{2}\right);\frac{\pi}{2}\right),
\]
which completes the proof.
\end{proof}
To conclude this section, we present an example to illustrate the
orthogonality relation (\ref{eq:OrthogonalityOmege}). 
\begin{example}
\label{exa:Orthogonality}When $p=2$, we see by (\ref{eq:RecOmega})
\[
\Omega_{2}^{(2)}(y)=\left(y-x+\frac{2}{2}\right)^{2}+\frac{1(1+2-1)}{4}=y^{2}-2\left(x-1\right)y+\left(x-1\right)^{2}+\frac{1}{2}.
\]
Using Table \ref{tab:EpnFirstValues}, we have

\begin{align*}
y^{0}\Omega_{2}^{(2)}(y)|_{y^{k}=E_{k}^{(2)}(x)} & =y^{2}-2\left(x-1\right)y+\left(x-1\right)^{2}+\frac{1}{2}|_{y^{k}=E_{k}^{(2)}(x)}\\
 & =x^{2}-2x+\frac{1}{2}-2\left(x-1\right)^{2}+\left(x-1\right)^{2}+\frac{1}{2}=0,
\end{align*}
and similarly 
\begin{align*}
 & y\Omega_{2}^{(2)}(y)|_{y^{k}=E_{k}^{(2)}(x)}\\
= & y^{3}-2\left(x-1\right)y^{2}+\left(x-1\right)^{2}y+\frac{y}{2}|_{y^{k}=E_{k}^{(2)}(x)}\\
= & x^{3}-3x^{2}+\frac{3}{2}x+\frac{1}{2}-2\left(x-1\right)\left(x^{2}-2x+\frac{1}{2}\right)+\left(x-1\right)^{3}+\frac{x-1}{2}=0.
\end{align*}
This confirms (\ref{eq:OrthogonalityOmege}) for $p=n=2$. 
\end{example}

\section{\label{sec:GMN}Connection to generalized Motzkin numbers}

\subsection{Preliminaries. }

Recall the definition of the generalized Motzkin numbers in Definition
\ref{def:GMN}. In fact, $M_{n,k}$ counts the number of certain weighted
lattice paths, called \emph{Motzkin paths} \cite[p.~319]{AC}. More
specifically, consider the paths with the following restrictions: 

1. all paths lie within the first quadrant;

2. only allow three types of paths:

~~~~a) horizontal path $\alpha_{k}$ from $(j,k)$ to $(j+1,k);$

~~~~b) diagonally up path $\beta_{k}$ from $(j,k)$ to $(j+1,k+1);$

~~~~c) and diagonally down path $\gamma_{k}$ from $(j,k)$ to
$(j+1,k-1);$ 

3. associate each type of the paths to different weights as $\alpha_{k}\mapsto1$,
$\beta_{k}\mapsto\sigma_{k}$, and 

~~~~$\gamma_{k}\mapsto\tau_{k}$, which we shall denote by the
weight triple $\left(1,\sigma_{k},\tau_{k}\right)$.

Then, $M_{n,k}$ counts the number of $(1,\sigma_{k},\tau_{k})$-weighted
paths from $(0,0)$ to $(n,k)$. 
\begin{example}
When $\sigma_{k}=\tau_{k}=1$, $M_{n,k}$ is the (usual) Motzkin number
that counts the number of paths from $(0,0)$ to $(n,k)$, with only
the first two restrictions above. For instance, when $n=3$ and $k=1$,
by recurrence (\ref{eq:MotzkinNumberRec}), we have
\begin{align*}
M_{3,1} & =M_{2,0}+M_{2,1}+M_{2,2}\\
 & =\left(M_{1,-1}+M_{1,0}+M_{1,1}\right)+\left(M_{1,0}+M_{1,1}+M_{1,2}\right)+\left(M_{1,1}+M_{1,2}+M_{1,3}\right)\\
 & =2M_{1,0}+3M_{1,1}\\
 & =2\left(M_{0,-1}+M_{0,0}+M_{0,1}\right)+3\left(M_{0,0}+M_{0,1}+M_{0,2}\right)=5.
\end{align*}
All the five paths from $(0,0)$ to $(3,1)$ are listed as follows.

\includegraphics{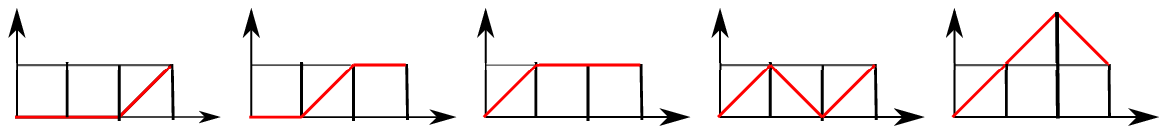}
\end{example}
The next result shows that when $k=0$, the generating function of
$M_{n,0}$ admits a form of continued fractions (called \emph{J(acobi)-fractions}).
See, e.g., \cite[p.~324]{AC}.
\begin{thm}
\label{thm:CFGMN}For the generalized Motzkin numbers $M_{n,k}$ defined
by (\ref{eq:MotzkinNumberRec}), we have
\begin{equation}
\sum_{n=0}^{\infty}M_{n,0}z^{n}=\frac{1}{1-\sigma_{0}z-\frac{\tau_{1}z^{2}}{1-\sigma_{1}z-\frac{\tau_{2}z^{2}}{1-\sigma_{2}z-\cdots}}}.\label{eq:JFractionMotzkin}
\end{equation}
\end{thm}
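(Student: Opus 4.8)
The quantity to be computed is the $k=0$ column generating function $F_0(z):=\sum_{n\ge0}M_{n,0}z^n$, so the plan is to introduce all of the column generating functions $F_k(z):=\sum_{n\ge0}M_{n,k}z^n$ and turn the two-index recurrence (\ref{eq:MotzkinNumberRec}) into a chain of relations among consecutive $F_k$. The first structural fact I would extract is an order estimate: since $M_{n,k}=0$ for $k>n$, the series $F_k(z)$ is divisible by $z^k$; moreover taking $k=n+1$ in (\ref{eq:MotzkinNumberRec}) collapses to $M_{n+1,n+1}=M_{n,n}$, whence $M_{n,n}=1$ for all $n$. Thus $F_k(z)=z^k(1+O(z))$, so each $F_k$ is $z^k$ times a unit and the ratios used below are legitimate formal power series.

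Next I would multiply (\ref{eq:MotzkinNumberRec}) by $z^{n+1}$ and sum over $n\ge0$, using $F_{-1}\equiv0$. The left-hand side becomes $F_k(z)-M_{0,k}$, and separating out the single initial value $M_{0,0}=1$ gives two families of functional equations, namely
\[
F_0(z)(1-\sigma_0 z)-\tau_1 z\,F_1(z)=1
\]
for the base column, and, for every $k\ge1$,
\[
F_k(z)(1-\sigma_k z)-\tau_{k+1}z\,F_{k+1}(z)=z\,F_{k-1}(z).
\]

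Then I would introduce the ratios $G_k(z):=F_k(z)/F_{k-1}(z)$ for $k\ge1$, which by the order estimate are power series of order exactly one. Dividing the $k$-th equation by $F_k$ converts the linear chain into the single self-referential relation
\[
G_k(z)=\frac{z}{1-\sigma_k z-\tau_{k+1}z\,G_{k+1}(z)},
\]
and dividing the base equation by $F_0$ yields $F_0(z)=1/(1-\sigma_0 z-\tau_1 z\,G_1(z))$. Writing $\tau_k z\,G_k=\tau_k z^2/(1-\sigma_k z-\tau_{k+1}z\,G_{k+1})$ and substituting this expression into itself repeatedly reproduces exactly the nested fraction on the right-hand side of (\ref{eq:JFractionMotzkin}).

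The only delicate point --- and the step I expect to be the main obstacle --- is giving the infinite continued fraction a rigorous meaning rather than a merely formal unfolding. Here the order estimate pays off: since $\tau_{k+1}z\,G_{k+1}(z)=O(z^2)$, replacing $G_{N+1}$ by $0$ and truncating the fraction at depth $N$ perturbs $F_0(z)$ only in coefficients of $z^{N+1}$ and higher. Consequently the convergents of the right-hand side of (\ref{eq:JFractionMotzkin}) converge to $F_0(z)$ coefficientwise in $\mathbb{R}[[z]]$, and I would conclude by recording this coefficientwise convergence, which promotes the formal identity into the asserted equality.
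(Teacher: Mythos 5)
Your proof is correct, but there is no internal proof to compare it with: the paper states this theorem as a known result and simply cites Flajolet--Sedgewick \cite[p.~324]{AC}. The proof in that reference is combinatorial: a weighted path from altitude $0$ back to altitude $0$ decomposes uniquely into level steps at altitude $0$ and arches (an up step, a path living at altitude $\geq 1$, a down step), which yields $F_0=1/(1-\sigma_0 z-\tau_1 z^2 H_1)$ with $H_1$ the generating function of the shifted paths one level up; iterating this decomposition altitude by altitude produces the J-fraction, and the depth-$N$ convergent is identified with the generating function of paths of height at most $N$. You replace this with a purely algebraic telescoping: functional equations for the column generating functions $F_k$, the ratios $G_k=F_k/F_{k-1}$ (legitimate formal series thanks to your observation that $M_{n,n}=1$, so each $F_k$ is $z^k$ times a unit), and coefficientwise convergence of the convergents. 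Both routes are sound; the combinatorial one buys the interpretation of the convergents as height-restricted paths, while yours is self-contained, works verbatim for arbitrary formal weights $\sigma_k,\tau_k$, and confronts the formal-convergence issue explicitly rather than by appeal to path lengths. The one step you should expand is the final truncation estimate: the claim that replacing $G_{N+1}$ by $0$ perturbs only coefficients of $z^{N+1}$ and higher deserves a one-line induction --- a perturbation of order $\geq 2$ at the innermost level gains order at least $2$ each time it passes up through a level, since the numerator $\tau_j z^2$ contributes order $2$ and the denominators are units with constant term $1$ --- which in fact shows the depth-$N$ convergent agrees with $F_0$ through order $z^{2N+1}$, more than you need.
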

A similar expression is known for moments. See, e.g., \cite[pp.~20--21]{Determinant}.
\begin{thm}
\label{thm:CFMoments} Let $X$ be an arbitrary random variable, with
moments $m_{n}$ and monic orthogonal polynomials $P_{n}(y)$ satisfying
the recurrence (\ref{eq:ThreeTermRec}), involving two sequences $s_{n}$
and $t_{n}$. Then, we have
\begin{equation}
\sum_{n=0}^{\infty}m_{n}z^{n}=\frac{m_{0}}{1-s_{0}z-\frac{t_{1}z^{2}}{1-s_{1}z-\frac{t_{2}z^{2}}{1-s_{2}z-\cdots}}}.\label{eq:JFractionMoment}
\end{equation}
\end{thm}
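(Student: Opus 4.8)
The plan is to prove this identity by the classical device that underlies Stieltjes' and Jacobi's theory: \emph{reversed} orthogonal polynomials combined with the orthogonality relation (\ref{eq:Orthogonality}). First I would introduce the moment functional $\mathcal{L}$ acting on polynomials by $\mathcal{L}[y^{k}]=m_{k}$, so that the target series is the formal power series $F(z):=\sum_{n\geq 0}m_{n}z^{n}$ and orthogonality reads $\mathcal{L}[y^{r}P_{n}]=0$ for $0\leq r<n$. The auxiliary objects are the reversed polynomials $\hat{P}_{n}(z):=z^{n}P_{n}(1/z)$. Substituting $y=1/z$ into (\ref{eq:ThreeTermRec}) and clearing denominators shows at once that they obey $\hat{P}_{n+1}(z)=(1-s_{n}z)\hat{P}_{n}(z)-t_{n}z^{2}\hat{P}_{n-1}(z)$ with $\hat{P}_{0}=1$, $\hat{P}_{1}=1-s_{0}z$, and (because $P_{n}$ is monic) constant term $\hat{P}_{n}(0)=1$; these are precisely the denominator convergents of the displayed J-fraction.

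The central computation is to expand the product $F(z)\hat{P}_{n}(z)$. Writing $P_{n}(y)=\sum_{i}c_{i}^{(n)}y^{i}$, the coefficient of $z^{n+r}$ in $F\hat{P}_{n}$ works out to $\sum_{i}c_{i}^{(n)}m_{r+i}=\mathcal{L}[y^{r}P_{n}(y)]$. By orthogonality this vanishes for every $0\leq r\leq n-1$, so $F(z)\hat{P}_{n}(z)=\tilde{A}_{n}(z)+O(z^{2n})$, where $\tilde{A}_{n}$ collects the terms of degree $\leq n-1$. Feeding the recurrence for $\hat{P}_{n}$ into this relation and comparing coefficients up to degree $2n-1$ then shows that $\tilde{A}_{n}$ satisfies the very same three-term recurrence $\tilde{A}_{n+1}=(1-s_{n}z)\tilde{A}_{n}-t_{n}z^{2}\tilde{A}_{n-1}$, with initial data $\tilde{A}_{0}=0$ and $\tilde{A}_{1}=m_{0}$. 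Hence $\tilde{A}_{n}=m_{0}\mathfrak{A}_{n}$, where $\mathfrak{A}_{n}$ are the numerator convergents normalized by $\mathfrak{A}_{1}=1$.

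Finally I would assemble the pieces. Since $\hat{P}_{n}(0)=1$, the series $\hat{P}_{n}$ is invertible, and dividing gives $F(z)=m_{0}\mathfrak{A}_{n}(z)/\hat{P}_{n}(z)+O(z^{2n})$; that is, the $n$-th convergent of $m_{0}/(1-s_{0}z-t_{1}z^{2}/(\cdots))$ agrees with $F$ through order $z^{2n-1}$. Letting $n\to\infty$ in the formal-power-series topology, where the continued fraction is by definition the limit of its convergents, yields (\ref{eq:JFractionMoment}).

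The step I expect to be the main obstacle is the bookkeeping that identifies the rational approximants $\tilde{A}_{n}/\hat{P}_{n}$ with the genuine convergents of the J-fraction and pins down the matching order $O(z^{2n})$; this is the Padé-approximant interpretation, and it requires care in matching both the numerator and denominator recurrences together with their correct initial data. By contrast, the orthogonality input is immediate once the product $F\hat{P}_{n}$ is expanded, so the real work is organizational rather than conceptual. (As a sanity check one may also read the right-hand side through Theorem \ref{thm:CFGMN} with $\sigma_{k}=s_{k}$ and $\tau_{k}=t_{k}$, which expresses it as $m_{0}\sum_{n}M_{n,0}z^{n}$ and foreshadows the moment–Motzkin identification pursued later.)
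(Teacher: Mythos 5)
Your proof is correct, but there is nothing in the paper to compare it against: the paper does not prove Theorem \ref{thm:CFMoments} at all, it quotes it as a known classical fact with a pointer to \cite[pp.~20--21]{Determinant}. What you have supplied is essentially the standard argument from that classical literature (Stieltjes--Wall--Perron, the Pad\'e-approximant formulation), and each step checks out: the reversed polynomials $\hat{P}_n(z)=z^nP_n(1/z)$ satisfy $\hat{P}_{n+1}=(1-s_nz)\hat{P}_n-t_nz^2\hat{P}_{n-1}$ with $\hat{P}_n(0)=1$ and are exactly the denominator convergents of the J-fraction; the coefficient of $z^{n+r}$ in $F\hat{P}_n$ is indeed $\sum_i c_i^{(n)}m_{r+i}=\mathcal{L}[y^rP_n]$, so orthogonality (\ref{eq:Orthogonality}) kills the coefficients of $z^{n},\dots,z^{2n-1}$; and since $\deg\tilde{A}_{n+1}\le n\le 2n-1$, your comparison of coefficients modulo $z^{2n}$ legitimately upgrades to an exact polynomial identity, identifying the $\tilde{A}_n$ with the numerator convergents (with the correct seeds $\tilde{A}_0=0$, $\tilde{A}_1=m_0$), after which invertibility of $\hat{P}_n$ and passage to the limit of convergents in the formal power series topology finish the proof. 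What your route buys is self-containedness: with it, the paper's Theorem \ref{thm:generalTHM}, and hence Theorem \ref{thm:Motzkin}, rests only on Theorem \ref{thm:CFGMN} plus elementary formal-series manipulation instead of an external reference. An alternative, more in the spirit of Section \ref{sec:GMN}, would be Viennot's combinatorial theory, which proves Theorem \ref{thm:CFGMN} and Theorem \ref{thm:CFMoments} simultaneously by interpreting both the moments and the Motzkin path counts through the same weighted-path expansion; your algebraic proof and that combinatorial proof are the two standard ways to establish this identity, and either would close the gap the paper leaves to the literature.
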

Combining (\ref{eq:JFractionMoment}) and (\ref{eq:JFractionMotzkin})
leads to the following general theorem.
\begin{thm}
\label{thm:generalTHM}Let the two sequences $s_{n}$ and $t_{n}$
be the ones appearing in the recurrence (\ref{eq:ThreeTermRec}),
for some random variable X, and assume that $m_{0}=1$. Also define
the generalized Motzkin number sequence $M_{n,k}$ by letting $\sigma_{k}=s_{k}$
and $\tau_{k}=t_{k}$ in (\ref{eq:MotzkinNumberRec}). Then, $M_{n,0}$
gives the moments of $X$, i.e., $M_{n,0}=m_{n}=\mathbb{E}[X^{n}]$. 
\end{thm}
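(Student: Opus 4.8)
The plan is to prove Theorem \ref{thm:generalTHM} by comparing two continued-fraction expansions for the same generating function, both of which have already been established in the excerpt. First I would invoke Theorem \ref{thm:CFMoments} applied to the given random variable $X$: since its monic orthogonal polynomials satisfy the three-term recurrence (\ref{eq:ThreeTermRec}) with sequences $s_n$ and $t_n$, and since we are assuming $m_0=1$, the moment generating function admits the J-fraction expansion
\[
\sum_{n=0}^{\infty}m_{n}z^{n}=\frac{1}{1-s_{0}z-\dfrac{t_{1}z^{2}}{1-s_{1}z-\dfrac{t_{2}z^{2}}{1-s_{2}z-\cdots}}}.
\]
Second, I would invoke Theorem \ref{thm:CFGMN} applied to the generalized Motzkin numbers $M_{n,k}$ defined with the specific choices $\sigma_k=s_k$ and $\tau_k=t_k$. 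That theorem gives
\[
\sum_{n=0}^{\infty}M_{n,0}z^{n}=\frac{1}{1-s_{0}z-\dfrac{t_{1}z^{2}}{1-s_{1}z-\dfrac{t_{2}z^{2}}{1-s_{2}z-\cdots}}},
\]
using exactly the substitutions $\sigma_k=s_k$, $\tau_k=t_k$ to identify the denominators.

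The crucial observation is that these two right-hand sides are \emph{literally identical} as formal continued fractions, because the substitution $\sigma_k=s_k$, $\tau_k=t_k$ makes every partial numerator and partial denominator match term by term. Hence the two power series on the left are equal as formal power series, and comparing coefficients of $z^n$ yields $M_{n,0}=m_n=\mathbb{E}[X^n]$ for every $n\geq 0$. To make the coefficient comparison rigorous I would note that a J-fraction of this shape determines its power-series coefficients uniquely: truncating at depth $N$ gives a rational function whose Taylor coefficients up to order $2N$ agree with the full fraction, so each $m_n$ (respectively $M_{n,0}$) is a fixed polynomial in finitely many of the $s_k,t_k$, and these polynomials coincide under the identification.

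I do not expect any genuine obstacle here, since the theorem is essentially a formal bookkeeping corollary of the two preceding results; the only point requiring a word of care is the legitimacy of equating coefficients of two continued fractions, which I would address by appealing to the standard fact (implicit in the derivation of (\ref{eq:JFractionMotzkin}) and (\ref{eq:JFractionMoment})) that the map from the sequences $(s_k),(t_k)$ to the coefficient sequence of the associated J-fraction is well defined at the level of formal power series. One should also confirm the base case $M_{0,0}=1=m_0$ separately, which is immediate from Definition \ref{def:GMN} and the hypothesis $m_0=1$, ensuring the two series share the same constant term before the recursive structure takes over.
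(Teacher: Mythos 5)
Your proof is correct and follows exactly the paper's own route: the paper states Theorem \ref{thm:generalTHM} as the direct consequence of combining the J-fraction for moments (\ref{eq:JFractionMoment}) with the J-fraction for generalized Motzkin numbers (\ref{eq:JFractionMotzkin}) under the identification $\sigma_k=s_k$, $\tau_k=t_k$ and the normalization $m_0=1$. Your additional remarks on the uniqueness of the coefficient sequence of a J-fraction and the base case $M_{0,0}=1=m_0$ are sound and only make explicit what the paper leaves implicit.
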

\begin{rem*}
The condition $m_{0}=1$ in Theorem \ref{thm:generalTHM} is usually
guaranteed by normalization of the density function.
\end{rem*}

\subsection{Proof and Applications of Theorem \ref{thm:Motzkin}. }
\begin{proof}
[Proof of Theorem \ref{thm:Motzkin}.]We apply Theorem \ref{thm:generalTHM}
to the random variable $\mathcal{E}^{(p)}+x$, whose moments are $E_{n}^{(p)}(x)$,
we directly prove Theorem \ref{thm:Motzkin}. 
\end{proof}
The combinatorial interpretation for $E_{n}^{(p)}(x)$ can now be
used to to obtain the following matrix representation.
\begin{thm}
\label{thm:MatrixEuler}Define the infinite dimensional matrix 
\[
RE^{(p)}:=\begin{pmatrix}x-\frac{p}{2} & -\frac{p}{4} & 0 & 0 & \cdots & 0 & \cdots\\
1 & x-\frac{p}{2} & -\frac{p+1}{2} & 0 & \cdots & 0 & \cdots\\
0 & 1 & x-\frac{p}{2} & \ddots & \ddots & \vdots & \cdots\\
0 & 0 & 1 & \ddots & -\frac{n(n+p-1)}{4} & 0 & \cdots\\
\vdots & \vdots & \vdots & \ddots & x-\frac{p}{2} & -\frac{(n+1)(n+p)}{4} & \cdots\\
0 & 0 & 0 & \ddots & 1 & \ddots & \ddots\\
\vdots & \vdots & \vdots & \cdots & \vdots & \ddots & \ddots
\end{pmatrix},
\]
and let $RE_{m}^{(p)}$ be the left upper $m\times m$ block of $RE^{(p)}$.
Then for any nonnegative integer $n\leq m$, the left upper entry
of $\left(RE_{m}^{(p)}\right)^{n}$ gives $E_{n}^{(p)}(x)$, i.e.,
\[
\left[\left(RE_{m}^{(p)}\right)^{n}\right]_{1,1}=E_{n}^{(p)}(x).
\]
\end{thm}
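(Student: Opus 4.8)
The plan is to connect the matrix power $\left(RE_m^{(p)}\right)^n$ directly to the weighted lattice paths counting $E_n^{(p)}(x)$. First I would observe that the matrix $RE^{(p)}$ is tridiagonal, with constant diagonal entries $x-p/2$, all subdiagonal entries equal to $1$, and superdiagonal entries $\left[RE^{(p)}\right]_{k,k+1}=-k(k+p-1)/4$. Comparing with the weight triple $(1,\sigma_k,\tau_k)$ from Theorem~\ref{thm:Motzkin}, where $\sigma_k=x-p/2$ and $\tau_k=-k(k+p-1)/4$, I would recognize that this is precisely the transition matrix whose entries encode the single-step weights of the Motzkin paths: the subdiagonal $1$'s are the weights of horizontal steps $\alpha_k\mapsto1$, the diagonal entries are the weights $\sigma_k$ of the diagonal-up steps $\beta_k$, and the superdiagonal entries are the weights $\tau_{k+1}$ of the diagonal-down steps $\gamma_{k+1}$.

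The key step is the standard transfer-matrix identity: for any tridiagonal (more generally any) weight matrix $W$, the entry $\left[W^n\right]_{a,b}$ equals the sum over all length-$n$ walks from state $a$ to state $b$ of the products of the edge weights traversed. Here I would set $W=RE^{(p)}$ (indexing rows and columns by the height $k=0,1,2,\dots$, so that entry $(1,1)$ corresponds to height $0$). Then $\left[\left(RE^{(p)}\right)^n\right]_{1,1}$ is exactly the sum over all weighted lattice paths of length $n$ that start at height $0$ and return to height $0$, where each step of type $\alpha,\beta,\gamma$ contributes its assigned weight. By the path interpretation of the generalized Motzkin numbers discussed after Definition~\ref{def:GMN}, together with Theorem~\ref{thm:Motzkin}, this sum is precisely $M_{n,0}=\mathfrak{E}_{n,0}^{(p)}=E_n^{(p)}(x)$. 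One subtlety I would address carefully is the orientation of $\sigma_k$ and $\tau_k$ in the recurrence \eqref{eq:MotzkinNumberRec} versus the row/column convention of the matrix, making sure the down-step weight $\tau_{k+1}$ lands in the superdiagonal position $(k,k+1)$ and the up-step weight in the subdiagonal, consistent with which index is the row.

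The remaining point is the truncation from the infinite matrix to the finite $m\times m$ block $RE_m^{(p)}$. Here I would argue that since the matrix is tridiagonal, a walk of length $n$ starting and ending at height $0$ can reach height at most $n$; hence if $n\le m$ the walk never visits any state with index exceeding $m-1$, so all contributing walks stay entirely within the indices $\{0,1,\dots,m-1\}$ retained by the block $RE_m^{(p)}$. Consequently $\left[\left(RE_m^{(p)}\right)^n\right]_{1,1}=\left[\left(RE^{(p)}\right)^n\right]_{1,1}$ for $n\le m$, and the truncation discards no path. The main obstacle I anticipate is purely bookkeeping rather than conceptual: getting the index conventions exactly right so that the superdiagonal entries $-k(k+p-1)/4$ of the matrix match the down-step weights $\tau_k$ under the shift $k\mapsto k+1$, and verifying cleanly that no walk of length $n\le m$ escapes the retained block. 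Once the transfer-matrix/path dictionary is set up correctly, the statement follows immediately from Theorem~\ref{thm:Motzkin}.
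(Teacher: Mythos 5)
Your route --- read $RE^{(p)}$ as the transfer matrix of the weighted Motzkin paths, identify the closed-walk sum with $\mathfrak{E}_{n,0}^{(p)}$ via Theorem \ref{thm:Motzkin}, and argue the truncation to $RE_{m}^{(p)}$ loses no paths --- is precisely the argument the paper intends; the paper gives no written proof beyond the remark that the combinatorial interpretation yields the matrix representation. So the approach is the same. However, two bookkeeping slips in your write-up need repair, one of which is a genuine (if small) hole.

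First, your step dictionary is internally inconsistent. In the transfer-matrix identity, a diagonal entry $(k,k)$ can only be the weight of a step that preserves the height $k$, i.e.\ a horizontal step; so the diagonal entries $x-\frac{p}{2}=\sigma_{k}$ must be the horizontal weights, and the off-diagonal $1$'s and $\tau$'s must be the up/down weights. Your opening assignment (subdiagonal $1$'s as horizontal steps, diagonal entries as up steps) is therefore impossible; you inherited it from a typo in the paper's item 3, which swaps the weights of $\alpha_{k}$ and $\beta_{k}$ relative to the defining recurrence \eqref{eq:MotzkinNumberRec}. The recurrence itself forces horizontal $\mapsto\sigma_{k}$, up $\mapsto1$, and down from height $k$ $\mapsto\tau_{k}$ (this is also the convention used in the paper's Dyck-path example). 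With that fixed, your later parenthetical is the correct dictionary: reading $[RE^{(p)}]_{ij}$ as the weight of a step from column-state $j$ to row-state $i$, the superdiagonal entry $(k,k+1)$ is the down-step weight $\tau_{k+1}$, the subdiagonal $1$ is the up-step weight, and the diagonal is the horizontal weight, exactly matching \eqref{eq:MotzkinNumberRec}; alternatively one may keep the row-to-column convention and note that a closed walk crosses each level equally often upward and downward, so $[(W^{\mathsf{T}})^{n}]_{1,1}=[W^{n}]_{1,1}$. Second, your truncation bound is too weak to close the case $n=m$: ``height at most $n$'' permits a walk to visit index $n=m$, which lies outside the retained block $\{0,1,\dots,m-1\}$, so the stated inference fails there. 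The correct bound is just as easy: a walk that starts and ends at height $0$ and visits height $h$ must contain at least $h$ up-steps and $h$ down-steps, so $h\le\lfloor n/2\rfloor\le\lfloor m/2\rfloor<m$ for $m\ge1$; hence every contributing walk stays inside the block and $[(RE_{m}^{(p)})^{n}]_{1,1}=[(RE^{(p)})^{n}]_{1,1}=E_{n}^{(p)}(x)$. With these two repairs your proof is complete.
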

\begin{example}
Let $p=2$ and $m=4$. Then
\[
RE_{4}^{(2)}:=\begin{pmatrix}x-1 & -1/2 & 0 & 0\\
1 & x-1 & -3/2 & 0\\
0 & 1 & x-1 & -3\\
0 & 0 & 1 & x-1
\end{pmatrix}
\]
and 
\[
\left(RE_{4}^{(2)}\right)^{3}=\begin{pmatrix}x^{3}-3x^{2}+\frac{3}{2}x+\frac{1}{2} & * & * & *\\
* & * & * & *\\
* & * & * & *\\
* & * & * & *
\end{pmatrix},
\]
confirming $E_{3}^{(2)}(x)=x^{3}-3x^{2}+3x/2+1/2$. (See Table \ref{tab:EpnFirstValues}.)
\end{example}
\begin{example}
Recall $E_{n}^{(1)}(1/2)=E_{n}(1/2)=E_{n}/2^{n}$, which gives the
Motzkin paths weighted by $(1,0,-k^{2}/4)$, which means that the
horizontal paths are eliminated. Therefore, $E_{n}/2^{n}$ counts
the weighted \emph{Dyck paths}, related to Catalan numbers $C_{n}$
\cite[Ex.~25]{Catalan}. For example, when $n=6$, there are 
\[
C_{3}:=\frac{1}{4}\binom{6}{3}=5
\]
weighted Dyck paths, listed as follows:

\includegraphics[scale=0.55]{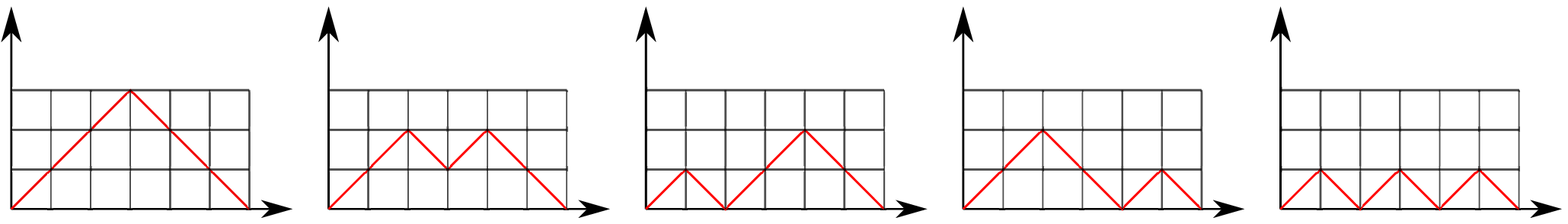}

\noindent Then, by noting that each diagonally down path from $(j,k)$
to $(j+1,k-1)$ has weight $-k^{2}/4$, we have 
\[
-\frac{61}{64}=\frac{E_{6}}{2^{6}}=\left(\frac{-1}{4}\right)^{3}\left(3^{2}2^{2}1^{2}+2^{2}2^{2}1^{2}+1^{2}2^{2}1^{2}+2^{2}1^{2}1^{2}+1^{2}1^{2}1^{2}\right).
\]
\end{example}
\begin{rem*}
From the computation above, one might have noticed and could prove,
by Lemma \ref{lem:KEY}, that the Euler numbers $E_{n}$ are given
by the weighted lattice paths $(1,0,-k^{2})$. This reconfirms that
Euler numbers are integers, odd index terms vanish, and even terms
have alternating signs. (See also \cite[Entries 24.2.7 and 24.2.9]{NIST}.)
\end{rem*}

\section{\label{sec:Bernoulli}Analogue to Bernoulli polynomials}

The \emph{higher-order Bernoulli polynomials} $B_{n}^{(p)}(x)$ are
defined by 
\[
\left(\frac{z}{e^{z}-1}\right)^{p}e^{zx}=\sum_{n=0}^{\infty}B_{n}^{(p)}(x)\frac{z^{n}}{n!}.
\]
When $p=1$, $B_{n}^{(1)}(x)=B_{n}(x)$ is the \emph{Bernoulli polynomial}
and $B_{n}(0)=B_{n}$ is the \emph{Bernoulli number}. Probabilistic
interpretation for $B_{n}(x)$ can be found, e.g., \cite[eq.~2.14]{Zagier}.
Touchard \cite[eq.~44]{Touchard} computed the orthogonal polynomials
with respect to Bernoulli numbers, denoted by $R_{n}(y)$. More specifically,
$R_{0}(y)=1$, $R_{1}(y)=y+1/2$ and for $n\geq1$, 
\[
R_{n+1}(y)=\left(y+\frac{1}{2}\right)R_{n}(y)-\frac{n^{4}}{4(2n+1)(2n-1)}R_{n-1}(y).
\]
Following similar steps, we shall obtain analogues of Theorem \ref{thm:Orthogonal}
and Theorem \ref{thm:Motzkin} for Bernoulli polynomials. The proof
is omitted. 
\begin{thm}
Let $\varrho_{n}(y)$ be the orthogonal polynomials with respect to
$B_{n}(x)$, i.e., for integers $r$ and $n$, with $0\leq r<n$,
\[
y^{r}\varrho_{n}(y)|_{y^{k}=B_{k}(x)}=0.
\]
Then, $\varrho_{0}(y)=1$, $\varrho_{1}(y)=y-x+1/2$ and for $n\geq1$,
\begin{equation}
\varrho_{n+1}(y)=\left(y-x+\frac{1}{2}\right)\varrho_{n}(y)-\frac{n^{4}}{4(2n+1)(2n-1)}\varrho_{n-1}(y).\label{eq:RecB}
\end{equation}
In particular, 
\[
\varrho_{n}(y)=\frac{n!}{(n+1)_{n}}p_{n}\left(y;\frac{1}{2},\frac{1}{2},\frac{1}{2},\frac{1}{2}\right),
\]
where $p_{n}(y;a,b,c,d)$ is the continuous-Hahn polynomial \cite[pp.~200--202]{OrthPoly}.
Moreover, let $\mathfrak{B}_{n,k}$ be the generalized Motzkin numbers
with special choice $\sigma_{k}=x-1/2$ and $\tau_{k}=-k^{4}/(4(2k+1)(2k-1))$.
Then $\mathfrak{B}_{n,0}=B_{n}(x)$. The matrix 

\[
RB:=\begin{pmatrix}x-\frac{1}{2} & -\frac{1}{12} & 0 & 0 & \cdots & 0 & \cdots\\
1 & x-\frac{1}{2} & -\frac{4}{15} & 0 & \cdots & 0 & \cdots\\
0 & 1 & x-\frac{1}{2} & \ddots & \ddots & \vdots & \cdots\\
0 & 0 & 1 & \ddots & -\frac{n^{4}}{4(2n+1)(2n-1)} & 0 & \cdots\\
\vdots & \vdots & \vdots & \ddots & x-\frac{1}{2} & -\frac{(n+1)^{2}}{4(2n+1)(2n+3)} & \cdots\\
0 & 0 & 0 & \ddots & 1 & \ddots & \ddots\\
\vdots & \vdots & \vdots & \cdots & \vdots & \ddots & \ddots
\end{pmatrix}
\]
generate all $B_{n}(x)$ through the power of its left upper block,
as an analogue to Theorem \ref{thm:MatrixEuler}.
\end{thm}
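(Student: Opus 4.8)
The plan is to reproduce, line for line, the three-part development of the Euler case, replacing the $\sech$-density together with the Meixner--Pollaczek input by a $\sech^2$-density together with a continuous-Hahn input. A simplification is available: since only order $p=1$ is treated here, no induction on the order is needed, so the convolution identity (\ref{eq:MPConvolution}) has no analogue to invoke and one only has to handle a single ``base'' random variable.

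First I would record the probabilistic interpretation underlying \cite{Zagier}. Writing $\frac{z}{e^{z}-1}e^{xz}=e^{(x-1/2)z}\cdot\frac{z/2}{\sinh(z/2)}$ and using the Fourier transform $\int_{\mathbb{R}}e^{i\omega t}\sech^{2}(\pi t)\,\mathrm{d}t=\frac{\omega}{\pi\sinh(\omega/2)}$, one checks that the random variable $L_{B}$ with (normalized) density $p_{B}(t)=\frac{\pi}{2}\sech^{2}(\pi t)$ satisfies $\mathbb{E}[e^{z\,iL_{B}}]=\frac{z/2}{\sinh(z/2)}$, whence
\[
B_{n}(x)=\mathbb{E}\!\left[\left(x-\tfrac12+iL_{B}\right)^{n}\right],
\]
the exact analogue of (\ref{eq:MomentEpn}). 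The density $p_{B}$ is, up to a constant, the continuous-Hahn weight $|\Gamma(\tfrac12+it)|^{4}=\pi^{2}\sech^{2}(\pi t)$, so the monic orthogonal polynomials for the (symmetric) moment sequence $\mathbb{E}[L_{B}^{n}]$ are the continuous-Hahn polynomials $p_{n}(y;\tfrac12,\tfrac12,\tfrac12,\tfrac12)$ after the monic normalization $n!/(n+1)_{n}$, with vanishing $s_n$ and a positive recurrence coefficient $t_{n}=n^{4}/(4(2n+1)(2n-1))$; this is read off from the three-term recurrence in \cite[pp.~200--202]{OrthPoly} by specializing all four parameters to $1/2$.

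Next I would pass from $L_{B}$ to $x-\tfrac12+iL_{B}$ by applying Lemma \ref{lem:KEY} twice, exactly as in the proof of Theorem \ref{thm:MainResult}. The scaling $C=i$ converts, through the factor $C^{2}$ in (\ref{eq:RecPt}), the positive continuous-Hahn coefficient into $t_{n}=-n^{4}/(4(2n+1)(2n-1))$, while the shift $c=x-\tfrac12$ sends $s_{n}\mapsto x-\tfrac12$ via (\ref{eq:RecPBar}). This simultaneously yields the recurrence (\ref{eq:RecB}) and the stated closed form $\varrho_{n}(y)=\frac{n!}{(n+1)_{n}}p_{n}(y;\tfrac12,\tfrac12,\tfrac12,\tfrac12)$. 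The Motzkin and matrix statements are then formal: Theorem \ref{thm:generalTHM}, applied to $x-\tfrac12+iL_{B}$ (whose $0$-th moment is $1$) with $\sigma_{k}=s_{k}=x-\tfrac12$ and $\tau_{k}=t_{k}=-k^{4}/(4(2k+1)(2k-1))$, gives $\mathfrak{B}_{n,0}=B_{n}(x)$; and the verbatim argument of Theorem \ref{thm:MatrixEuler}, recognizing $RB$ as the transfer matrix of the recurrence (\ref{eq:MotzkinNumberRec}) for these weights, gives $[(RB_{m})^{n}]_{1,1}=B_{n}(x)$.

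I expect the only genuinely delicate step to be the middle one. The continuous-Hahn polynomials carry factors of $i$ inside their definition, and the base variable itself is the purely imaginary $iL_{B}$, so one must track these imaginary factors carefully when passing from the real, positive-definite continuous-Hahn recurrence to the monic form (\ref{eq:ThreeTermRec}). This bookkeeping is precisely what pins down both the rational coefficient $n^{4}/(4(2n+1)(2n-1))$ and its sign; everything else is a routine transcription of the Euler argument.
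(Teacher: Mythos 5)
Your strategy is exactly the one the paper intends (its proof is omitted with the remark that it follows the Euler template), and your analytic inputs are all correct: the normalized density $\tfrac{\pi}{2}\sech^{2}(\pi t)$, the representation $B_{n}(x)=\mathbb{E}\left[\left(x-\tfrac12+iL_{B}\right)^{n}\right]$, the identification $|\Gamma(\tfrac12+it)|^{4}=\pi^{2}\sech^{2}(\pi t)$ of the continuous-Hahn weight, and the monic continuous-Hahn data $s_{n}=0$, $t_{n}=+\tfrac{n^{4}}{4(2n+1)(2n-1)}$. The genuine gap is in the very step you flag as delicate, and it is asserted rather than carried out: after applying Lemma \ref{lem:KEY} with $C=i$ and $c=x-\tfrac12$ you have $t_{n}=-\tfrac{n^{4}}{4(2n+1)(2n-1)}$, and substituting this into (\ref{eq:ThreeTermRec}) gives
\[
\varrho_{n+1}(y)=\left(y-x+\tfrac12\right)\varrho_{n}(y)+\frac{n^{4}}{4(2n+1)(2n-1)}\varrho_{n-1}(y),
\]
which is the \emph{opposite} sign to (\ref{eq:RecB}) as printed, so your claim that this ``yields the recurrence (\ref{eq:RecB})'' is false as written. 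Likewise, Lemma \ref{lem:KEY} transforms the polynomials and not only their recurrence coefficients: it gives $\varrho_{n}(y)=\tfrac{i^{n}n!}{(n+1)_{n}}\,p_{n}\left(-i\left(y-x+\tfrac12\right);\tfrac12,\tfrac12,\tfrac12,\tfrac12\right)$, not the printed $\tfrac{n!}{(n+1)_{n}}p_{n}(y;\tfrac12,\tfrac12,\tfrac12,\tfrac12)$; already at degree one the latter is $y$ rather than $\varrho_{1}(y)=y-x+\tfrac12$. Compare Theorem \ref{thm:MainResult}, which in the Euler case carefully keeps exactly these features: the prefactor $i^{n}$ and the argument $-i(y-x+\tfrac p2)$.

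Note also that you cannot claim both halves of the theorem as printed, because they contradict each other: Theorem \ref{thm:generalTHM} requires $\tau_{k}=t_{k}$, and you (correctly) use $\tau_{k}=-\tfrac{k^{4}}{4(2k+1)(2k-1)}$ in the Motzkin step, which is incompatible with reading (\ref{eq:RecB}) literally, since that would mean $t_{k}=+\tfrac{k^{4}}{4(2k+1)(2k-1)}$. In fact your machinery, executed honestly, produces the mathematically correct statements: a direct check with $B_{1}(x)=x-\tfrac12$ and $B_{2}(x)=x^{2}-x+\tfrac16$ gives $\varrho_{2}(y)=\left(y-x+\tfrac12\right)^{2}+\tfrac1{12}$, confirming the plus sign, and the plus sign is also what is consistent with the negative superdiagonal of $RB$, with $\tau_{k}<0$, and with the Euler analogue (\ref{eq:RecOmega}). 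So the printed recurrence (\ref{eq:RecB}) and the printed closed form carry sign/argument errors; a complete proof along your lines must derive the corrected forms and point out the discrepancy, rather than asserting—as your last step does—that the computation lands on the formulas as stated.
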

\begin{rem*}
We also tried to consider the analogue on $B_{n}^{(p)}(x)$. However,
the rational coefficients $n^{4}/(4(2n+1)(2n-1))$ provides more difficulties
than $n^{2}$ that appears in (\ref{eq:RecQ}). Moreover, the convolution
property (\ref{eq:MPConvolution}) for Meixner-Pollaczek polynomials
fails for continuous Hahn polynomials. Therefore, we only have the
following computation and conjectures.
\end{rem*}
Let $\varrho_{n+1}^{(p)}(y)$ be the monic orthogonal polynomial with
respect to $B_{n}^{(p)}(x)$, and assume the three-term recurrence
is
\[
\varrho_{n+1}^{(p)}(y)=\left(y-a_{n}^{(p)}\right)\varrho_{n}^{(p)}(y)-b_{n}^{(p)}\varrho_{n-1}(y).
\]
To compute $\varrho_{n}^{(p)}(y)$, one could use, e.g., \cite[eq.~2.1.10]{Ismail},
which does not give explicit formulas for $a_{n}^{(p)}$ and $b_{n}^{(p)}$.
However, for $a_{n}^{(p)}$, Lemma \ref{lem:KEY} implies that 
\[
a_{n}^{(p)}=x-p/2.
\]
The first several terms of $b_{n}^{(p)}$ is given in the following
table

\vspace{-8bp}

\begin{table}[H]
\begin{centering}
{\setlength\extrarowheight{2pt}%
\begin{tabular}{|l|c|c|c|c|c|}
\hline 
\multirow{1}{*}{} & $p=1$  & $p=2$  & $p=3$  & $p=4$ & $p=5$\tabularnewline
\hline 
$n=1$  & $\frac{1}{12}$ & $\frac{1}{6}$  & $\frac{1}{4}$  & $\frac{1}{3}$ & $\frac{5}{12}$\tabularnewline
$n=2$  & $\frac{4}{15}$ & $\frac{13}{30}$  & $\frac{3}{5}$ & $\frac{23}{30}$ & $\frac{14}{15}$\tabularnewline
$n=3$  & $\frac{81}{140}$ & $\frac{372}{455}$ & $\frac{1339}{1260}$ & $\frac{2109}{1610}$ & $\frac{1527}{980}$\tabularnewline
$n=4$  & $\frac{64}{63}$ & $\frac{3736}{2821}$  & $\frac{138688}{84357}$ & $\frac{668543}{339549}$ & $\frac{171830}{74823}$\tabularnewline
$n=5$ & $\frac{625}{396}$ & $\frac{1245075}{636988}$  & $\frac{299594775}{127670972}$ & $\frac{42601023200}{15509529057}$ & $\frac{3638564965}{1154491404}$\tabularnewline
\hline 
\end{tabular}}\vspace{-10bp}
\par\end{centering}
\caption{\label{tab:bpn}$b_{n}^{(p)}$ for $1\leq n,\ p\leq5$}
\end{table}

\vspace{-10bp}

Here, the first column is $b_{n}^{(1)}=n^{4}/(4(2n+1)(2n-1))$, as
that in the last term of (\ref{eq:RecB}). Also, one can easily see
that the first row is linear as $b_{1}^{(p)}=p/12$, so is the second
row $b_{2}^{(p)}=(5p+3)/10$. The following conjecture is due to Karl
Dilcher.
\begin{conjecture}
The third row is given by
\[
b_{3}^{(p)}=\frac{175p^{2}+315p+158}{140(2p+3)};
\]
the fourth row satisfies 
\[
b_{4}^{(p)}=\frac{6125p^{4}+25725p^{3}+41965p^{2}+29547p+7230}{21(5p+3)(175p^{2}+315p+158)};
\]
and the fifth row is 

$b_{5}^{(p)}=25(5p+3)(471625p^{6}+3678675p^{5}+12324235p^{4}+22096305p^{3}+22009540p^{2}+11549748p+2519472)\bigg/(132(175p^{2}+315p+158)(6125p^{4}+25725p^{3}+41965p^{2}+29547p+7230))$.
\end{conjecture}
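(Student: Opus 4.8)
The plan is to compute $b_{n}^{(p)}$ from Hankel determinants of the Bernoulli moments and to exploit their polynomial dependence on $p$, reducing the conjecture to a finite verification. First, by Lemma~\ref{lem:KEY} the coefficient $b_{n}^{(p)}$ is unchanged under a shift $x\mapsto x-c$, so it is independent of $x$ (as Table~\ref{tab:bpn} reflects), and I may specialize to $x=p/2$. There the generating function collapses to an even series, $\sum_{n\ge0}B_{n}^{(p)}(p/2)\,\tfrac{z^{n}}{n!}=\big(\tfrac{z}{2\sinh(z/2)}\big)^{p}$, so the moment sequence $\mu_{n}(p):=B_{n}^{(p)}(p/2)$ is symmetric, with $\mu_{2m+1}=0$. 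Writing $\big(\tfrac{z}{2\sinh(z/2)}\big)^{p}=\exp\!\big(p\,g(z)\big)$ for the fixed even series $g(z)=-\tfrac{z^{2}}{24}+O(z^{4})$, one reads off that each $\mu_{2m}(p)$ is a polynomial in $p$ of degree exactly $m$, its leading term coming from the $m$-fold use of the quadratic coefficient of $g$.

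Next I would invoke the classical determinant formula for the recurrence coefficients: up to the sign convention of (\ref{eq:RecB}), $b_{n}^{(p)}=\Delta_{n-2}\,\Delta_{n}/\Delta_{n-1}^{2}$, where $\Delta_{k}=\det\big(\mu_{i+j}(p)\big)_{0\le i,j\le k}$ and $\Delta_{-1}=1$. Since the entries are polynomials in $p$, every $\Delta_{k}$ is a polynomial in $p$ and hence $b_{n}^{(p)}$ is a rational function of $p$. Because $\mu$ is symmetric, the Hankel matrix is nonzero only on same-parity index pairs, so after separating rows and columns by parity $\Delta_{k}$ factors as a product of two smaller Hankel determinants built from the even moments $\mu_{2c}(p)$. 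This factorization is what lets me bound $\deg_{p}\Delta_{k}$ and thereby produce explicit bounds $\deg(\mathrm{numerator})\le\alpha_{n}$ and $\deg(\mathrm{denominator})\le\beta_{n}$ for $b_{n}^{(p)}$.

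With such a priori bounds in hand, the conjecture becomes a finite check. If the true $b_{n}^{(p)}$ and a conjectured rational function agree at $\alpha_{n}+\beta_{n}+1$ distinct values of $p$, then the polynomial $\mathrm{num}\cdot\mathrm{den}'-\mathrm{num}'\cdot\mathrm{den}$, of degree at most $\alpha_{n}+\beta_{n}$, has more roots than its degree and must vanish, forcing equality. For each positive integer $p$ the moments $\mu_{n}(p)$ are explicitly computable, so $b_{n}^{(p)}$ can be evaluated exactly by the Stieltjes (Chebyshev) algorithm; I would run this for $p=1,2,\dots,\alpha_{n}+\beta_{n}+1$ and match against Dilcher's formulas, the rows $n=3,4,5$ requiring at most fourteen values. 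Table~\ref{tab:bpn} already supplies $1\le p\le5$.

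The hard part will be the second step: securing tight degree bounds and, above all, controlling the denominators. The remark preceding the conjecture explains why no shortcut exists --- the convolution identity (\ref{eq:MPConvolution}) that trivialized the Euler case has no analogue for continuous Hahn polynomials, and the orthogonality measure for $p\ge2$ (the $p$-fold self-convolution underlying the weight) is no longer a single classical weight. Consequently the Hankel determinants of $\mu_{n}(p)$ do not factor as transparently as in the $p=1$ continuous-Hahn case, and the irreducible factors visible in the denominators ($2p+3$, $5p+3$, $175p^{2}+315p+158$, and so on) must be shown to survive in the ratio $\Delta_{n-2}\Delta_{n}/\Delta_{n-1}^{2}$ rather than cancel against the numerator. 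Pinning down $\deg_{p}\Delta_{n-1}$ exactly, and identifying which factors of $\Delta_{n-1}$ persist, is the delicate arithmetic core; once that bookkeeping is in place, the rest is the finite, if laborious, evaluation described above.
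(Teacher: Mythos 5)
Note at the outset that the paper offers no proof of this statement: it is presented as a conjecture (due to Karl Dilcher), supported only by the data in Table \ref{tab:bpn} for $1\le p\le5$, and the remark preceding it explains precisely why the authors stopped there (the convolution identity (\ref{eq:MPConvolution}) has no analogue for continuous Hahn polynomials). So your proposal must be judged as a standalone proof attempt. Its skeleton is sound and is, in my view, a legitimate route: by Lemma \ref{lem:KEY} the coefficient $b_n^{(p)}$ is independent of $x$; at $x=p/2$ the moments $\mu_n(p)=B_n^{(p)}(p/2)$ come from the even series $\left(z/(2\sinh(z/2))\right)^p=\exp(pg(z))$, so the odd moments vanish and $\mu_{2m}(p)$ is a polynomial in $p$ of degree exactly $m$; the recurrence coefficient is, up to sign, $\Delta_{n-2}\Delta_n/\Delta_{n-1}^2$ with $\Delta_k=\det\left(\mu_{i+j}(p)\right)_{0\le i,j\le k}$, hence a rational function of $p$; and two rational functions with a priori degree bounds that agree at sufficiently many points must coincide, by cross-multiplication. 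Carried to completion, this would prove the three displayed formulas.

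As written, however, there are genuine gaps. The decisive finite verification is never performed: Table \ref{tab:bpn} supplies only $p=1,\dots,5$, your own count calls for fourteen values, and the bounds your sketch actually supports demand more. Each entry of $\Delta_k$ satisfies $\deg_p\mu_{i+j}\le\lfloor(i+j)/2\rfloor$, giving $\deg_p\Delta_k\le k(k+1)/2$, and the parity factorization does not improve this: the even and odd blocks' degrees add back up to $k(k+1)/2$, and since the leading coefficients are Hankel determinants of the nonvanishing sequence $(-1/12)^m(2m-1)!!$, the bound is attained. Hence for $n=5$ the cross-multiplied polynomial has degree up to $27$, so you need roughly $28$ exact evaluations; your figure of fourteen seems to assume the true $b_5^{(p)}$ already has the degrees of the conjectured answer, which is circular. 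Second, your closing paragraph misdiagnoses the difficulty: the agreement-at-points argument needs only \emph{upper} degree bounds on $\Delta_{n-2}\Delta_n$ and $\Delta_{n-1}^2$; you never need to show that factors such as $2p+3$ or $175p^{2}+315p+158$ survive cancellation, so the ``delicate arithmetic core'' you describe is a red herring, while the genuinely required ingredient --- the computation itself, e.g.\ evaluating the polynomials $\Delta_k(p)$, $k\le5$, symbolically and simplifying the ratios --- is exactly the part left undone. Finally, pin down the sign before matching data: already $\Delta_1=\mu_2=-p/12$, so the Hankel ratio equals $-b_n^{(p)}$ for the positive entries of Table \ref{tab:bpn}; relatedly, the minus sign printed in (\ref{eq:RecB}) is inconsistent with the orthogonality computation and with the plus sign in the Euler analogue (\ref{eq:RecOmega}), a trap that your ``up to the sign convention'' hedge should resolve explicitly rather than defer.
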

\begin{rem*}
We do not have a conjecture on the general formula for $b_{n}^{(p)}$. 
\end{rem*}

\section*{Acknowledgment}

The corresponding author is supported by the National Science Foundation
of China (No.~1140149).

The first author is supported by the Izaak Walton Killam Postdoctoral
Fellowship at Dalhousie University. He would like to thank his supervisor
Prof.~Karl Dilcher and also Prof.~Christophe Vignat for their valuable
suggestions and guidance.

\end{document}